\documentclass[11pt,reqno]{amsart}

\usepackage{amsmath,amssymb,amsthm,mathtools}
\usepackage[margin=1.15in]{geometry}
\usepackage[hidelinks]{hyperref}
\usepackage[expansion=false]{microtype}

\theoremstyle{plain}
\newtheorem{theorem}{Theorem}[section]
\newtheorem{proposition}[theorem]{Proposition}
\newtheorem{lemma}[theorem]{Lemma}
\newtheorem{corollary}[theorem]{Corollary}
\theoremstyle{definition}
\newtheorem{definition}[theorem]{Definition}
\newtheorem{question}[theorem]{Question}
\newtheorem{remark}[theorem]{Remark}

\DeclareMathOperator{\conv}{conv}
\DeclareMathOperator{\diam}{diam}
\DeclareMathOperator{\st}{st}
\DeclareMathOperator{\intr}{int}

\DeclareMathOperator{\inr}{inr}
\DeclareMathOperator{\Var}{Var}
\newcommand{\Rn}{\mathbb{R}^n}
\newcommand{\Rnm}{\mathbb{R}^{n-1}}
\newcommand{\A}{\mathcal{A}}
\newcommand{\Ac}{\mathcal{A}^{\circ}}
\newcommand{\dH}{d_{H}}
\newcommand{\dA}{d_{\A}}
\newcommand{\Sp}{S^{n-2}}
\newcommand{\eps}{\varepsilon}
\newcommand{\To}{\longrightarrow}

\begin{document}

\title{Qualitative convexity and universal cross-sections}
\author{David Victor Feldman}
\address{Department of Mathematics and Statistics, University of New Hampshire,
Durham, NH 03824, USA}
\email{dvfinnh@gmail.com}
\subjclass[2020]{Primary 52A20; Secondary 52A27, 54F15, 68V20}
\keywords{convex body, cross-section, aligned shape, Hausdorff metric, continuum,
$\omega$-limit, universal point, formal verification}
\date{\today}

\begin{abstract}
We study a family of questions in convexity in which \emph{size does not matter}:
one records a convex cross-section only up to translation and scaling, so that the
data attached to a convex body $B$ and a direction $\rho$ is a path in the compact
metric space $\A_{n-1}$ of \emph{aligned shapes}.  The object of interest is the
asymptotic behaviour of this path as the cutting hyperplane approaches the last
supporting hyperplane, encoded by an invariant $T(B,\rho)$ that we call the
\emph{tail}.  We show that tails are always continua, that polyhedral and smooth
support points are ``boring'' (the tail is a point), and that non-boring behaviour
forces degenerate contact.  We show that cross-section paths are locally rectifiable, that every locally
rectifiable path is realisable approximately and a dense class exactly, and that
exact realisation fails in general: a second-order obstruction of bounded-turning
type produces a rectifiable path that is not a cross-section path.  For tails,
by contrast, no such restriction survives: every continuum of shapes occurs as a
tail, on the nose rather than up to approximation.  We construct bodies possessing
\emph{nearly universal} points, at which the renormalised cross-sections approximate
every planar (more generally $(n-1)$-dimensional) convex shape arbitrarily well; such
points can be made dense in the boundary, with arbitrary prescribed tails at the
grafting sites.  Every result below has been formally verified in Lean~4.  We close with several
optimisation questions and a higher-codimension variant.
\end{abstract}

\maketitle

\section{Introduction}\label{sec:intro}

This paper concerns what one might call \emph{qualitative convexity}: the study of
those questions in convexity in which size does not matter.  We record convex
cross-sections of a body only up to translation and scaling---so that a disc and a
concentric disc of twice the radius carry the same datum---and we ask what
\emph{shapes}, in this deflationary sense, can be forced to appear, and with what
asymptotic regularity.

Fix an integer $n>2$ throughout.  By a \emph{body} we mean a compact convex subset of
Euclidean space; the ambient dimension will always be clear from context, and $\Rn$
carries its standard metric.  Given a body $B\subseteq\Rn$ and a direction $\rho$, the
hyperplanes perpendicular to $\rho$ cut $B$ into a one-parameter family of
lower-dimensional bodies.  Recording each such cross-section up to translation and
scaling produces a path in a metric space $\A_{n-1}$ of \emph{aligned shapes}
(Section~\ref{sec:shapes}).  Following the direction of $\rho$, we single out the
\emph{last} supporting hyperplane and study the behaviour of the path as the cutting
hyperplane approaches it.  This behaviour is captured by the \emph{tail}
$T(B,\rho)\subseteq\A_{n-1}$ (Section~\ref{sec:paths}), the set of aligned shapes seen
with arbitrarily high precision arbitrarily close to the last support point.

The tail is an asymptotic invariant, sensitive only to the germ of $B$ at its last
support point in the direction $\rho$.  One may spend a lifetime studying polyhedra,
or smooth bodies with mild singularities, and never see a tail that is more than a
single point; we call such data $(B,\rho)$ \emph{boring}
(Section~\ref{sec:boring}).  Typical phenomena attached to optimisation or
isoperimetric problems are stable under small perturbations, and small perturbations
obliterate all non-boring behaviour.  Nonetheless non-boring points exist in
abundance.  Our main constructions produce bodies with a point at which the
renormalised cross-sections approximate \emph{every} aligned shape---we call such a
point \emph{nearly universal}---and, by an iterated grafting construction, bodies
whose nearly universal points are dense in the boundary, with arbitrary prescribed
tails at the grafting sites (Section~\ref{sec:universal}).

The organising principle throughout is a dictionary between the metric geometry of the
path $t\mapsto[B_t]$ in $\A_{n-1}$ and the convex geometry of $B$ near its last
support point.  We state two entries at the outset.
First, cross-section paths are automatically continuous, and their limit sets are
therefore \emph{continua}: the tail is always non-empty, compact and connected
(Theorem~\ref{thm:continuum}).  Second, boring versus non-boring is a statement about
the \emph{order of contact} of $B$ with its last supporting hyperplane: quadratic
contact (a smooth point of positive curvature) or conical contact (a polyhedral
vertex) is boring, and non-boring behaviour requires a degenerate contact: some transverse
width must vanish with a vertical tangent (Section~\ref{sec:boring}).  The realisation problem---which abstract paths arise as
cross-section paths---is governed by local rectifiability, but not settled by it:
rectifiability is necessary, a dense class is realised exactly, and a rectifiable
path is exhibited that is not a cross-section path
(Section~\ref{sec:realize}).

\medskip
\noindent\textit{Formal verification.}  Every result in this paper has been
formally verified in Lean~4~\cite{Lean4} against the \textsf{mathlib}
library~\cite{mathlib}, in a
development of roughly eleven thousand lines produced over forty-four
commissions to the Aristotle system (Harmonic): the metric and compactness structure of the shape space
(Propositions~\ref{prop:metric} and~\ref{prop:compact}), the continuity and
Minkowski-segment estimates (Lemma~\ref{lem:lipschitz},
Corollary~\ref{cor:cont}, Lemma~\ref{lem:mlshort},
Theorem~\ref{thm:continuum}), the necessity theorem in its fully quantified
form (Theorem~\ref{thm:necessity}, Lemma~\ref{lem:mixedwidth},
Lemma~\ref{lem:zigzag}, Theorem~\ref{thm:norealize}), the grafting calculus
(Lemma~\ref{lem:graft}), the existence of nearly universal bodies
(Theorem~\ref{thm:singlenu}), the realisation of every continuum of
shapes as a tail---set equality, not density---%
(Theorem~\ref{thm:densenu}, Corollary~\ref{cor:pathdense}), the sufficiency
construction, and the amplitude-restricted zigzag realisation
(Proposition~\ref{prop:hierzig}) are all machine-checked, with every theorem
depending on exactly the three standard axioms of the Lean kernel
(propositional extensionality, choice, and quotient soundness).  The
exercise was adversarial in both directions: the verifier's validation
checks caught four errors in intermediate formal statements---two
transcription errors and two structural flaws in proposed discrete
constructions---each repaired before it could propagate.  The development is
available at
\url{https://github.com/DavidVFeldman/qualitative-convexity} (commit
\texttt{0730179}), archived at \textsc{doi}
\href{https://doi.org/10.5281/zenodo.21829501}{10.5281/zenodo.21829501}; the
original verification request is archived at
\url{https://aristotle.harmonic.fun/dashboard/requests/ec14a21a-3007-431b-8c2d-e8c5803fa7f8}.

\medskip
\noindent\textit{Conventions.}  For a compact convex set $K$ we write $\diam K$ for its
diameter, $\dH$ for the Hausdorff metric, and $h_K$ for the support function,
$h_K(u)=\sup_{x\in K}\langle x,u\rangle$.  We use repeatedly the identities
$\dH(K,L)=\sup_{u}|h_K(u)-h_L(u)|$ (the supremum over the unit sphere) and the fact
that the diameter is $2$-Lipschitz for $\dH$.  We write $\st K$ for the Steiner point
of $K$; recall that $\st$ is translation equivariant and Lipschitz for $\dH$, with a
dimensional constant $c$ (see \cite[\S\S1.7, 5.4]{Schneider}).  The support-function identity
above and the Blaschke selection theorem are \cite[\S\S1.7, 1.8]{Schneider}.

\section{The space of aligned shapes}\label{sec:shapes}

Let $\mathcal K$ denote the set of compact convex subsets of $\Rnm$ of positive
diameter.  An \emph{aligned shape}, or \emph{a-shape}, is an equivalence class of
elements of $\mathcal K$ under the group generated by translations $x\mapsto x+v$ and
dilations $x\mapsto\lambda x$ with $\lambda>0$.  Rotations are \emph{not} permitted,
and generally change the a-shape.  We write $[K]$ for the a-shape of $K$ and
$\A_{n-1}$ for the set of all a-shapes.  A representative $K$ with $\diam K=1$ is
called a \emph{unit representative}; every a-shape has unit representatives, unique up
to translation.

\begin{definition}\label{def:metric}
For a-shapes $s_1,s_2$ with unit representatives $K_1,K_2$ set
\[
  \dA(s_1,s_2)\;=\;\min_{v\in\Rnm}\dH\bigl(K_1,\,K_2+v\bigr).
\]
\end{definition}

The minimum is attained: $v\mapsto\dH(K_1,K_2+v)$ is continuous and, since
$\dH(K_1,K_2+v)\ge |v|-\diam K_1-\diam K_2\to\infty$, coercive.  The value is
independent of the choice of unit representatives, because two unit representatives of
the same a-shape differ by a translation (a dilation would alter the diameter).

\begin{proposition}\label{prop:metric}
$(\A_{n-1},\dA)$ is a metric space.
\end{proposition}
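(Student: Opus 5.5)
The plan is to verify the metric axioms directly from Definition~\ref{def:metric}, using only that $\dH$ is a metric on compact sets and is translation invariant, $\dH(K+w,L+w)=\dH(K,L)$. Well-definedness (independence of the unit representatives) and attainment of the minimum are already recorded after the definition, so we may fix unit representatives throughout. Finiteness and nonnegativity are immediate.

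\emph{Symmetry.} By translation invariance, $\dH(K_1,K_2+v)=\dH(K_1-v,K_2)=\dH(K_2,K_1+(-v))$. As $v$ ranges over $\Rnm$ so does $-v$, so the two minima coincide and $\dA(s_1,s_2)=\dA(s_2,s_1)$.

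\emph{Triangle inequality.} Let $s_1,s_2,s_3$ have unit representatives $K_1,K_2,K_3$. Choose minimisers $v$ and $w$ with $\dA(s_1,s_2)=\dH(K_1,K_2+v)$ and $\dA(s_2,s_3)=\dH(K_2,K_3+w)$. Then
\[
\dA(s_1,s_3)\le \dH(K_1,K_3+v+w)\le \dH(K_1,K_2+v)+\dH(K_2+v,K_3+w+v).
\]
The last term equals $\dH(K_2,K_3+w)$ by translation invariance, which gives $\dA(s_1,s_3)\le\dA(s_1,s_2)+\dA(s_2,s_3)$.

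\emph{Identity of indiscernibles.} This is the only step using more than formal manipulation, though it is mild. If $\dA(s_1,s_2)=0$, attainment of the minimum gives some $v$ with $\dH(K_1,K_2+v)=0$. Since $\dH$ is a genuine metric on compact sets, $K_1=K_2+v$, so $[K_1]=[K_2]$. Note that attainment is what makes this work: with an infimum we would only get a sequence of translates converging, and would need coercivity plus compactness to extract a limit. Conversely, if $s_1=s_2$ the unit representatives differ by a translation, so taking $v$ to undo it gives distance $0$.

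The only place I expect any care is needed is confirming that the well-definedness remark really covers all choices: any element of $[K]$ with diameter $1$ is a translate of $K$, because a dilation by $\lambda$ scales the diameter by $\lambda$. This ensures the value of $\dA$ does not depend on which unit representatives were fixed at the start.
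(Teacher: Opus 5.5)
Your proposal is correct and matches the paper's proof essentially step for step. It uses the same minimiser-plus-translation-invariance argument for the triangle inequality, and the same use of attainment of the minimum together with the fact that $\dH$ separates compact sets for identity of indiscernibles; you additionally spell out the symmetry that the paper calls clear.
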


\begin{proof}
Symmetry is clear.  If $\dA(s_1,s_2)=0$ then some translate of $K_2$ has Hausdorff
distance $0$ from $K_1$, hence equals $K_1$; as both are unit representatives this
says $s_1=s_2$, and the converse is immediate.  For the triangle inequality choose
$v,w$ attaining $\dA(s_1,s_2)=\dH(K_1,K_2+v)$ and $\dA(s_2,s_3)=\dH(K_2,K_3+w)$.  By
translation invariance of $\dH$,
\[
  \dA(s_1,s_3)\le \dH\bigl(K_1,K_3+v+w\bigr)
  \le \dH(K_1,K_2+v)+\dH\bigl(K_2+v,K_3+w+v\bigr)
  =\dA(s_1,s_2)+\dA(s_2,s_3).\qedhere
\]
\end{proof}

\begin{proposition}\label{prop:compact}
$\A_{n-1}$ is compact.
\end{proposition}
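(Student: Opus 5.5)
The plan is to show that $\A_{n-1}$ is sequentially compact, which suffices for a metric space (Proposition~\ref{prop:metric}). The idea is to realise $\A_{n-1}$ as a continuous image of a compact set of convex bodies.

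Let $\mathcal K_1$ be the set of compact convex subsets $K\subseteq\Rnm$ with $\diam K=1$ and $\st K=0$. Every a-shape has a unit representative. Since $\st$ is translation equivariant, we may translate that representative so that its Steiner point is the origin. Hence the quotient map $\pi\colon\mathcal K_1\to\A_{n-1}$, $K\mapsto[K]$, is surjective.

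$\pi$ is $1$-Lipschitz from $(\mathcal K_1,\dH)$ to $(\A_{n-1},\dA)$. Indeed, taking $v=0$ in Definition~\ref{def:metric} gives $\dA([K],[L])\le\dH(K,L)$.

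It therefore remains to show that $\mathcal K_1$ is compact in the Hausdorff metric.

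\emph{Boundedness.} Every $K\in\mathcal K_1$ lies in a fixed ball. We have $\st K\in K$, because the Steiner point lies in the body (it is an average of the points $\nabla h_K$, which lie in $K$, over the sphere). Since $\diam K=1$, this gives $K\subseteq \bar B(0,1)$.

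\emph{Limits.} Take any sequence $K_j\in\mathcal K_1$. The Blaschke selection theorem gives a subsequence converging in $\dH$ to a compact convex set $K$. We must check that $K\in\mathcal K_1$:
\begin{itemize}
\item The diameter is $2$-Lipschitz for $\dH$, hence continuous, so $\diam K=\lim\diam K_j=1$. In particular $K$ has positive diameter, so it is not degenerate.
\item The Steiner point is Lipschitz for $\dH$, so $\st K=\lim \st K_j=0$.
\end{itemize}
Thus $\mathcal K_1$ is a closed subset of the Blaschke-compact family of convex bodies in $\bar B(0,1)$, and so it is compact.

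Finally, the continuous image of a compact space is compact, so $\pi(\mathcal K_1)=\A_{n-1}$ is compact.

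The main point needing care is the boundedness step, which uses $\st K\in K$. If one prefers to avoid the Steiner point, there is an alternative normalisation: translate so that a chosen point of $K$, such as the lexicographic minimum, sits at the origin. That point is not continuous in $K$, but continuity is not needed in this version. One only needs $K_j\subseteq\bar B(0,1)$ to apply Blaschke to get a limit $K$. Then $\diam K=1$ follows as before, and
$\dA([K_j],[K])\le\dH(K_j,K)\to0$
directly gives a convergent subsequence in $\A_{n-1}$. This sequential argument is in fact the cleanest route, and it is the one I would write up.
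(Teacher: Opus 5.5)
Your proposal is correct, and the sequential form you say you would write up (translate each unit representative so it contains the origin, hence lies in $\overline B(0,1)$; apply Blaschke selection; use continuity of the diameter and $\dA\le\dH$) is exactly the paper's proof. The Steiner-point version, via the compact family $\mathcal K_1$ and the $1$-Lipschitz surjection onto $\A_{n-1}$, is also valid, but the normalisation $\st K=0$ and the fact $\st K\in K$ are not needed, since any translation placing a point of $K$ at the origin already gives the uniform bound.
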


\begin{proof}
Let $(s_k)$ be a sequence with unit representatives $K_k$.  Translating, we may assume
each $K_k$ contains the origin, whence $K_k\subseteq\overline B(0,1)$ because
$\diam K_k=1$.  By the Blaschke selection theorem a subsequence $K_{k_j}$ converges in
$\dH$ to a compact convex set $K$.  The diameter is continuous for $\dH$, so
$\diam K=1$ and $K\in\mathcal K$.  Then $\dA(s_{k_j},[K])\le\dH(K_{k_j},K)\to0$.  Thus
$\A_{n-1}$ is sequentially compact, hence compact.
\end{proof}

Two further facts will be used.  Write $\Ac_{n-1}\subseteq\A_{n-1}$ for the a-shapes of
bodies with non-empty interior (equivalently, of full dimension $n-1$).

\begin{proposition}\label{prop:open-connected}
$\A_{n-1}$ is path-connected, and $\Ac_{n-1}$ is a dense open subset.
\end{proposition}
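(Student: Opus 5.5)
For path-connectedness I will use straight-line interpolation between unit representatives. Given $s_0,s_1\in\A_{n-1}$ with unit representatives $K_0,K_1$, set $K_t=(1-t)K_0+tK_1$ (Minkowski combination) for $t\in[0,1]$. Each $K_t$ is compact and convex. It has positive diameter, since $h_{K_t}(u)+h_{K_t}(-u)=(1-t)w_{K_0}(u)+tw_{K_1}(u)$ and the width in a direction realising $\diam K_0$ is positive. The map $t\mapsto K_t$ is Lipschitz for $\dH$, because $h_{K_t}$ is affine in $t$ and the support-function identity gives
\[
\dH(K_t,K_{t'})\le|t-t'|\,\sup_u|h_{K_1}(u)-h_{K_0}(u)|.
\]
It then remains to check that $K\mapsto[K]$ is continuous from $(\mathcal K,\dH)$ to $(\A_{n-1},\dA)$. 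I expect the only real estimate to be here. For $K,L\in\mathcal K$ with diameters $a,b>0$,
\[
\dA([K],[L])\le \dH(K/a,L/b)\le \tfrac1a\dH(K,L)+\Bigl|\tfrac1a-\tfrac1b\Bigr|\sup_{x\in L}|x|,
\]
using $\dH(\lambda L,\mu L)\le|\lambda-\mu|\sup_{x\in L}|x|$. Since the diameter is $2$-Lipschitz, $b\to a$ as $L\to K$, so this tends to $0$. Hence $t\mapsto[K_t]$ is a continuous path from $s_0$ to $s_1$.

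For openness of $\Ac_{n-1}$, take $[K]$ with $K$ a unit representative having nonempty interior, so $K$ contains a ball $\overline B(x,r)$. The key fact I will use is that if $\dH(K,L)<r/2$ with $L$ convex, then $L\supseteq \overline B(x,r/2)$. Suppose not: separate a point $y\in\overline B(x,r/2)\setminus L$ from $L$ by a unit vector $u$ with $h_L(u)<\langle y,u\rangle\le\langle x,u\rangle+r/2$. Since $h_K(u)\ge\langle x,u\rangle+r$, this gives $h_K(u)-h_L(u)>r/2$, a contradiction. So if $\dA([K],[L])<r/2$, choose the unit representative $L'$ of $[L]$ with $\dH(K,L')<r/2$; then $L'$ has interior, and the $\dA$-ball of radius $r/2$ about $[K]$ lies in $\Ac_{n-1}$.

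For density, take any $[K]$ with $K$ a unit representative, and let $L_\eps=K+\eps\overline B(0,1)$. Then $L_\eps$ has interior and $\dH(K,L_\eps)=\eps$, so by the continuity of $K\mapsto[K]$ established above, $[L_\eps]\to[K]$ in $\dA$. Each $[L_\eps]$ lies in $\Ac_{n-1}$, which gives density. I expect no serious obstacle anywhere. The one step needing care is continuity of the quotient map, specifically the rescaling by diameters, which the displayed bound handles.
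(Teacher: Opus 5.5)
Your proposal is correct and follows the paper's proof: Minkowski interpolation of unit representatives for path-connectedness, an interior-ball (inradius) argument for openness, and slight thickening for density. Two details differ only cosmetically. Your rescaling-about-the-origin estimate for the continuity of $K\mapsto[K]$ replaces the paper's diameter--Steiner renormalisation, and your separation argument for the persistence of an inscribed ball makes explicit the continuity of the inradius that the paper simply asserts.
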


\begin{proof}
Path-connectedness: given unit representatives $K_0,K_1$, the support functions
$h_s=(1-s)h_{K_0}+s\,h_{K_1}$, $s\in[0,1]$, are support functions of bodies $L_s$ whose
width in the direction attaining $\diam K_0$ is at least $(1-s)\diam K_0$, and in
the direction attaining $\diam K_1$ at least $s\,\diam K_1$; hence $\diam L_s>0$
for every $s$.  The path $s\mapsto[L_s]$ is continuous, by the diameter--Steiner
renormalisation used in Corollary~\ref{cor:cont} below.  The inradius of a unit
representative is a well-defined continuous function on $\A_{n-1}$, positive exactly on
$\Ac_{n-1}$; hence $\Ac_{n-1}$ is open.  Density holds
because any body is a Hausdorff limit of full-dimensional bodies (thicken slightly).
\end{proof}

\section{Cross-section paths and the tail}\label{sec:paths}

Fix a body $B\subseteq\Rn$ with non-empty interior and a unit vector $u$ representing
the parallel class of rays $\rho$.  Parameterise position along $\rho$ by the height
$t=\langle x,u\rangle$, and put
\[
  a=\min_{x\in B}\langle x,u\rangle,\qquad f=\max_{x\in B}\langle x,u\rangle=h_B(u).
\]
The hyperplane at height $t$ is $H_t=\{x:\langle x,u\rangle=t\}$, and the
cross-section is $B_t=B\cap H_t$.  Fixing an isometric identification of $\Rnm$ with
$H_0$ and transporting it to every $H_t$ by orthogonal projection parallel to $\rho$,
we regard each $B_t$ as a subset of $\Rnm$.  For $t\in(a,f)$ the section $B_t$ is
full-dimensional, hence of positive diameter, and we obtain the \emph{cross-section
path}
\[
  P_{B,\rho}\colon (a,f)\To \Ac_{n-1},\qquad t\longmapsto [B_t].
\]
We call $f$ the \emph{last} support value.  The path \emph{terminates} if the section
$B_f$ at the last supporting hyperplane has positive diameter, and does not terminate
if $B_f$ is a single point $p$, the last support point.  Our interest lies chiefly in
the non-terminating case, where the germ of $B$ at $p$ is at issue.

\subsection*{Continuity}
The following regularity is the analytic backbone of the paper.

\begin{lemma}\label{lem:lipschitz}
Let $[t_0,t_1]\subset(a,f)$ and set $\eta=\min(t_0-a,\,f-t_1)>0$.  Let
$R=\sup_{x\in B}\|x-\langle x,u\rangle u\|$ bound the transverse extent of $B$.  Then
$t\mapsto B_t$ is $(2R/\eta)$-Lipschitz for $\dH$ on $[t_0,t_1]$.
\end{lemma}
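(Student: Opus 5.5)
The plan is to prove the one-sided estimate that for $s,t\in[t_0,t_1]$ every point of $B_s$ lies within $2R|s-t|/\eta$ of $B_t$; by symmetry in $s$ and $t$ this gives the Hausdorff bound. Throughout, $B_s$ and $B_t$ are identified with subsets of $\Rnm$ via projection parallel to $\rho$, so all distances are transverse distances.

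Take $s<t$; the case $s>t$ is symmetric, using $a$ in place of $f$. The key step is a convexity (cone) argument. Choose a point $q\in B$ at height $f$, which exists by compactness. Given $x\in B_s$, consider the segment from $x$ to $q$. It lies in $B$ and crosses height $t$ at the point
\[
y=x+\lambda(q-x),\qquad \lambda=\frac{t-s}{f-s}.
\]
Hence $y\in B_t$. Its transverse displacement from $x$ is $\lambda$ times the transverse part of $q-x$. That transverse part has norm at most $2R$, since both transverse components have norm at most $R$. Since $s\le t_1$, we have $f-s\ge f-t_1\ge\eta$, so $\lambda\le (t-s)/\eta$. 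Therefore
\[
\operatorname{dist}(x,B_t)\le \frac{2R}{\eta}\,(t-s).
\]

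For the reverse direction, take $y\in B_t$ and push it down toward a point $q'\in B$ at height $a$. The segment from $y$ to $q'$ meets height $s$ at parameter $\mu=(t-s)/(t-a)$. Because $t\ge t_0$, we have $t-a\ge \eta$, so $\mu\le (t-s)/\eta$, and the same bound follows.

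The only point needing care is checking that the denominators $f-s$ and $t-a$ are both at least $\eta$. For this it matters which extreme point each direction uses: we move toward $f$ when going from lower height $s$ up to $t$, and toward $a$ when going from $t$ down to $s$. With that matched correctly, both one-sided bounds hold with constant $2R/\eta$, and the Hausdorff bound follows.
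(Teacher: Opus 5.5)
Your proof is correct, but it takes a genuinely different route from the paper's. The paper argues dually, through support functions. For each transverse unit vector $u'$, the function $g_{u'}(t)=h_{B_t}(u')$ is concave (the slicing fact behind Lemma~\ref{lem:convexbody}) and bounded by $R$ in absolute value. The three-slope inequality compares chords on $[t_0,t_1]$ with chords reaching out to heights $t_0-\eta\ge a$ and $t_1+\eta\le f$, which bounds every chord slope by $2R/\eta$. The identity $\dH(B_s,B_t)=\sup_{u'}|g_{u'}(s)-g_{u'}(t)|$ then finishes.

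You argue primally. You slide each point of the lower section along a segment to a point of $B_f$, and each point of the upper section to a point of $B_a$; $\eta$ then appears as a lower bound on the height remaining to the cone apex. Your version uses only convexity and compactness of $B$ and the point-to-set definition of Hausdorff distance, with no support-function identity and no facts about concave functions. It also handles the extreme heights explicitly. In the paper's proof one of the comparison points $t_0-\eta$, $t_1+\eta$ is always $a$ or $f$ itself, so concavity of $g_{u'}$ is tacitly needed on the closed interval $[a,f]$, not just on $(a,f)$ as stated.

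The paper's version, in exchange, fits the concave section-support calculus it reuses throughout (Proposition~\ref{prop:degenerate}, Lemma~\ref{lem:mixedwidth}, Lemma~\ref{lem:envelope}, Lemma~\ref{lem:graft}). In those results the one-sided derivatives and slope bounds of such concave section supports carry the argument, not merely a Lipschitz constant for $t\mapsto B_t$.
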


\begin{proof}
For a fixed transverse unit vector $u'$ the function
$g_{u'}(t)=h_{B_t}(u')=\sup\{\langle x',u'\rangle : (x',t)\in B\}$ is the support of a
linear functional over the slices of a convex set, hence concave on $(a,f)$; it is
bounded by $R$ in absolute value.  A concave function bounded by $R$ on $(a,f)$ has,
on $[t_0,t_1]$, one-sided slopes controlled by comparison with the values at
$t_0-\eta$ and $t_1+\eta$, giving Lipschitz constant at most $2R/\eta$, uniformly in
$u'$.  Since $\dH(B_t,B_s)=\sup_{u'}|g_{u'}(t)-g_{u'}(s)|$, the claim follows.
\end{proof}

\begin{corollary}\label{cor:cont}
$P_{B,\rho}$ is continuous on $(a,f)$, and is locally Lipschitz there as a map into
$\A_{n-1}$.
\end{corollary}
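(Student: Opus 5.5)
The plan is to pass from the Hausdorff-Lipschitz estimate of Lemma~\ref{lem:lipschitz} to the shape metric by a \emph{diameter--Steiner renormalisation}. Fix $[t_0,t_1]\subset(a,f)$. For $t$ in this interval put $K_t=(B_t-\st B_t)/\diam B_t$. This is a unit representative of $[B_t]$, centred at the origin. For $s,t\in[t_0,t_1]$ the definition of $\dA$ gives
\[
  \dA([B_s],[B_t])\le \dH(K_s,K_t),
\]
since we may simply take $v=0$. So it suffices to show that $t\mapsto K_t$ is Lipschitz for $\dH$ on $[t_0,t_1]$. Continuity on all of $(a,f)$ then follows, because every point of $(a,f)$ lies in the interior of such an interval.

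The key steps, in order, are these.
\begin{enumerate}
\item By Lemma~\ref{lem:lipschitz}, $t\mapsto B_t$ is $L$-Lipschitz with $L=2R/\eta$.
\item Since $\diam$ is $2$-Lipschitz for $\dH$, the function $d(t)=\diam B_t$ is $2L$-Lipschitz.
\item Since $\st$ is $c$-Lipschitz, $t\mapsto\st B_t$ is $cL$-Lipschitz.
\item The function $d$ is continuous and positive on the compact interval $[t_0,t_1]$, because each $B_t$ there is full-dimensional. Hence $d\ge\delta>0$ on $[t_0,t_1]$.
\item The sets $B_t-\st B_t$ all lie in the ball of radius $\diam B_t\le 2R$ about the origin, because the Steiner point lies in the body.
\end{enumerate}

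We now estimate $\dH(K_s,K_t)$ through support functions. Write $C_t=B_t-\st B_t$ and $d_t=d(t)$. By translation equivariance of $\st$ and the triangle inequality for $\dH$,
\[
  \dH(C_s,C_t)\le \dH(B_s,B_t)+|\st B_s-\st B_t|\le(1+c)L|s-t|.
\]
Next, $h_{K_t}=h_{C_t}/d_t$, and we split
\[
  \Bigl|\frac{h_{C_s}}{d_s}-\frac{h_{C_t}}{d_t}\Bigr|
  \le \frac{|h_{C_s}-h_{C_t}|}{d_s}+|h_{C_t}|\,\Bigl|\frac1{d_s}-\frac1{d_t}\Bigr|.
\]
The first term is at most $(1+c)L|s-t|/\delta$. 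For the second, $|h_{C_t}|\le d_t$ and $|1/d_s-1/d_t|\le |d_s-d_t|/\delta^2$, so the second term is at most $2R\cdot 2L|s-t|/\delta^2$. Taking the supremum over unit directions gives the explicit Lipschitz constant
\[
  \frac{(1+c)L}{\delta}+\frac{4RL}{\delta^2}.
\]

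The only step needing care is the lower bound $\delta$ on the diameter. This is what makes the Lipschitz property local rather than global: near $f$ in the non-terminating case, $\diam B_t\to0$, so no uniform constant can be expected. On a compact subinterval, however, the bound follows from continuity of $d$ together with positivity of the diameter of a full-dimensional section, so I expect no genuine obstacle. Alternatively, one could avoid the Steiner point and use the minimising translation directly. The Steiner point is the cleaner choice, though, because it gives a continuous, equivariant choice of centre. The same computation is what Proposition~\ref{prop:open-connected} invokes for the path $s\mapsto[L_s]$.
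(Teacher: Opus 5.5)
Your proof is correct and follows the paper's argument. You bound $\diam B_t$ below by $\delta$ on a compact subinterval, compose Lemma~\ref{lem:lipschitz} with the renormalisation $K\mapsto(K-\st K)/\diam K$, and use $\dA\le\dH$ of the renormalised sections. Where the paper simply asserts that this renormalisation is Lipschitz, you carry the estimate out explicitly and obtain the constant $(1+c)L/\delta+4RL/\delta^2$.
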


\begin{proof}
On $[t_0,t_1]$ the diameter $\diam B_t$ is continuous and positive, hence bounded
below by some $\delta>0$.  Renormalisation $K\mapsto (K-\st K)/\diam K$ is Lipschitz
on the family of bodies with $\diam\ge\delta$ contained in a fixed ball (the Steiner
point and the diameter are Lipschitz, and division by a quantity bounded below is
Lipschitz).  Composing with Lemma~\ref{lem:lipschitz} and using
$\dA([B_t],[B_s])\le\dH$ of the renormalised sections gives the claim.
\end{proof}

In the non-terminating case we reparameterise the approach to $f$ by an increasing
homeomorphism onto $[0,\infty)$ and regard $P_{B,\rho}$ as a path
$[0,\infty)\to\Ac_{n-1}$; the invariants below do not depend on this choice.

\subsection*{The tail}
Write $\overline{P_{B,\rho}(x,f)}$ for the closure in $\A_{n-1}$ of the image of
$P_{B,\rho}$ on $(x,f)$.  These sets decrease as $x\uparrow f$.

\begin{definition}\label{def:tail}
The \emph{tail} of $(B,\rho)$ is
\[
  T(B,\rho)\;=\;\bigcap_{x<f}\overline{P_{B,\rho}(x,f)}\;=\;
  \bigl\{\, s\in\A_{n-1} : [B_{t_k}]\to s\ \text{for some } t_k\uparrow f \,\bigr\}.
\]
The data $(B,\rho)$, and the last cross-section, are \emph{boring} if $T(B,\rho)$ is a
single point (equivalently, if $[B_t]$ converges as $t\uparrow f$), and
\emph{non-boring} otherwise.
\end{definition}

The tail is precisely the $\omega$-limit set of the cross-section path.  Because it is
built from a \emph{continuous} path in a compact space, its structure is constrained:

\begin{theorem}\label{thm:continuum}
For every body $B$ and direction $\rho$, the tail $T(B,\rho)$ is a non-empty compact
connected subset of $\A_{n-1}$; that is, a continuum.
\end{theorem}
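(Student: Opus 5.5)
We use the standard argument that the $\omega$-limit set of a continuous path in a compact metric space is a continuum. For each $x\in(a,f)$ put $C_x=\overline{P_{B,\rho}(x,f)}$. By Corollary~\ref{cor:cont} the path $P_{B,\rho}$ is continuous on $(a,f)$. Hence $P_{B,\rho}(x,f)$ is the continuous image of the connected interval $(x,f)$, so it is connected, and its closure $C_x$ is connected. Each $C_x$ is non-empty and closed in the compact space $\A_{n-1}$ (Proposition~\ref{prop:compact}), so it is compact. The family $\{C_x\}$ decreases as $x\uparrow f$. By Definition~\ref{def:tail}, $T(B,\rho)=\bigcap_{x<f}C_x$. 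In the terminating case the tail is simply $\{[B_f]\}$ if one allows the path to extend; otherwise the same argument applies verbatim. So we may argue uniformly.

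Non-emptiness and compactness come first. The $C_x$ are nested non-empty compact sets, so by the finite intersection property their intersection is non-empty. It is compact as a closed subset of $\A_{n-1}$. Equivalently, take any $t_k\uparrow f$; by sequential compactness some subsequence of $[B_{t_k}]$ converges, and its limit lies in the tail.

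Connectedness is the main step. Suppose $T=T(B,\rho)=F_1\sqcup F_2$ with $F_1,F_2$ disjoint, non-empty and closed. Both are compact, so they are at positive distance $3\delta$. Let $U_i$ be the open $\delta$-neighbourhood of $F_i$; then $\overline{U_1}\cap\overline{U_2}=\emptyset$. The set $K=\A_{n-1}\setminus(U_1\cup U_2)$ is compact and disjoint from $T$. Hence $\{K\cap C_x\}$ is a nested family of compact sets with empty intersection, so some $K\cap C_{x_0}$ is empty, and therefore $C_x\subseteq U_1\cup U_2$ for all $x\ge x_0$. Since $F_1,F_2\subseteq T\subseteq C_x$, the set $C_x$ meets both $U_1$ and $U_2$. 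These are disjoint open sets covering $C_x$, which contradicts the connectedness of $C_x$. (Alternatively, one can cite the standard fact that a nested intersection of continua in a compact Hausdorff space is a continuum.) The only delicate point is to have $C_x$ eventually inside $U_1\cup U_2$, and the compactness argument above handles it. So no real obstacle remains beyond invoking Corollary~\ref{cor:cont} for continuity of the path.
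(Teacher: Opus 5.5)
Your proof is correct and follows the paper's argument: the sets $\overline{P_{B,\rho}(x,f)}$ are nested continua by Corollary~\ref{cor:cont} and Proposition~\ref{prop:compact}, so their intersection is a continuum, and the only difference is that you prove the nested-intersection-of-continua fact directly by a separation argument instead of citing it. Your aside about the terminating case is unnecessary (and asserted without proof), but harmless, because your argument never uses it and applies verbatim in both cases.
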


\begin{proof}
Each $\overline{P_{B,\rho}(x,f)}$ is a closed subset of the compact space
$\A_{n-1}$, hence compact, and is the closure of the continuous---by
Corollary~\ref{cor:cont}---image of the connected interval $(x,f)$, hence connected.
The family is nested and consists of non-empty compacta, so its intersection is
non-empty and compact.  A nested intersection of compact connected sets in a compact
Hausdorff space is connected, so $T(B,\rho)$ is a continuum.
\end{proof}

Theorem~\ref{thm:continuum} already excludes many subsets of $\A_{n-1}$ from being
tails; only continua occur.  At the other extreme, our constructions in
Section~\ref{sec:universal} realise the largest possible continuum, $T(B,\rho)=\A_{n-1}$
itself.

\section{Which points are boring}\label{sec:boring}

Boring versus non-boring is a statement about the order of contact of $B$ with its
last supporting hyperplane.  The two classical families are boring.

\begin{proposition}[Polyhedral points]\label{prop:poly}
If $B$ is a polytope and $p$ is a vertex that is the unique last support point for
$\rho$, then $(B,\rho)$ is boring.
\end{proposition}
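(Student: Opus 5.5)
The plan is to show that near the vertex $p$ the sections $B_t$ are exact homothetic copies of a single section, so the cross-section path is eventually constant in $\A_{n-1}$. Then the tail is a single point directly from Definition~\ref{def:tail}.

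First I localise. Since $B$ is a polytope and $p$ is a vertex, there is a neighbourhood $U$ of $p$ in which $B$ coincides with the tangent cone. That is, $B\cap U=(p+C)\cap U$, where $C=\{x:\langle a_i,x\rangle\le 0,\ i\in I(p)\}$ and $I(p)$ is the set of facets through $p$. To get this, choose $U$ to be a ball around $p$ of radius less than the distance from $p$ to every facet hyperplane not containing $p$; such hyperplanes have positive distance from $p$ because there are finitely many of them. Since $p$ is the unique last support point, $\langle x,u\rangle<f$ for every $x\in B\setminus\{p\}$. Hence $\langle c,u\rangle<0$ for every nonzero $c\in C$: if $c\neq 0$ in $C$ had $\langle c,u\rangle\ge 0$, then $p+\eps c\in B$ for small $\eps$ would reach height at least $f$ at a point other than $p$, contradicting uniqueness.

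Next I confine the sections to $U$. By compactness of $C\cap S^{n-1}$, we have $\langle c,u\rangle\le-\kappa\|c\|$ on $C$ for some $\kappa>0$. So the slice $(p+C)\cap H_t$ lies in the ball of radius $(f-t)/\kappa$ about $p$. I also need the slice $B_t$ itself to lie in $U$ for $t$ close to $f$, not merely its part inside $U$. I would argue this as follows. The set $B\setminus U$ is compact and avoids $p$, so its maximal height is some $f'<f$. Therefore, for $t>f'$ we have $B_t\subseteq U$, and so $B_t=(p+C)\cap H_t$.

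Finally I use homogeneity. For $f'<t<f$, the slice $(p+C)\cap H_t$ equals
\[
p+\tfrac{f-t}{f-t_0}\bigl((p+C)\cap H_{t_0}-p\bigr)
\]
translated along $u$ for a fixed $t_0$. After projecting to $\Rnm$ this is a dilate plus a translate of $B_{t_0}$, so $[B_t]=[B_{t_0}]$. The path is therefore eventually constant and $T(B,\rho)=\{[B_{t_0}]\}$.

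The only step requiring care is the localisation: showing that for $t$ near $f$ the whole section lies in the region where $B$ agrees with its tangent cone. The compactness argument giving $f'<f$ settles it.
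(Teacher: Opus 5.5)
Your proof is correct and follows the paper's route: you localise to the tangent cone $p+C$, then use the cone's homogeneity to see that the sections near height $f$ are homothetic, so the path is eventually constant; your compactness argument giving $f'<f$ and the bound $\langle c,u\rangle\le-\kappa\|c\|$ supply details the paper leaves implicit. One small touch-up: with $r$ the radius of $U$, state the final range as $\max(f',\,f-\kappa r)<t<f$, since identifying $B_t$ with $(p+C)\cap H_t$ needs both $B_t\subseteq U$ and $(p+C)\cap H_t\subseteq U$.
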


\begin{proof}
Near $p$ the polytope coincides with its tangent cone $C$ at $p$, a polyhedral cone
with apex $p$.  For $t$ close to $f$ the section $B_t=C_t$ is a fixed polygon scaled
linearly toward $p$; the a-shape $[C_t]$ is therefore constant.  Hence $[B_t]$ is
eventually constant and the tail is a point.
\end{proof}

\begin{proposition}[Smooth points of positive curvature]\label{prop:smooth}
If $\partial B$ is $C^2$ near the last support point $p$ with positive Gaussian
curvature there, then $(B,\rho)$ is boring.
\end{proposition}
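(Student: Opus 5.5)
Near the last support point $p$ I will write $\partial B$ as a graph over the tangent hyperplane $H_f$, then show that the rescaled sections converge in $\dH$ to an ellipsoid. The limiting a-shape will be the class of the ellipsoid of the second fundamental form.

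Choose coordinates with $p=0$, $u=-e_n$ up to sign conventions, so that $H_f=\{x_n=0\}$ and $B$ lies in $\{x_n\le 0\}$ near $p$. Since $\partial B$ is $C^2$ near $p$ and $H_f$ supports $B$ at $p$, there is a $C^2$ function $\phi$ on a neighbourhood $U$ of $0\in\Rnm$ with $\phi(0)=0$ and $\nabla\phi(0)=0$. Near $p$ we have $B=\{(y,x_n): x_n\le-\phi(y)\}$. Positive Gaussian curvature together with convexity makes the Hessian $Q=D^2\phi(0)$ positive definite. Writing $s=f-t$ for the depth below the last hyperplane, the section is, for small $s$,
\[
  B_t=\{y\in U:\phi(y)\le s\},
\]
because for small $s$ the sublevel set lies in a small ball inside $U$. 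Positive definiteness gives $\phi(y)\ge c|y|^2$ near $0$, so $B_t\subseteq \overline B(0,\sqrt{s/c})$.

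Next, rescale by $y=\sqrt{s}\,z$. Taylor's theorem gives $\phi(\sqrt s z)=s\bigl(\tfrac12\langle Qz,z\rangle+o(1)\bigr)$ uniformly for $|z|\le C$. So $s^{-1/2}B_t=\{z:\tfrac12\langle Qz,z\rangle+\omega_s(z)\le1\}$, where $\sup_{|z|\le C}|\omega_s(z)|\to0$. I will then check that this set is squeezed between the ellipsoids $E_{1-\eps}$ and $E_{1+\eps}$, where $E_r=\{\tfrac12\langle Qz,z\rangle\le r\}$, once $s$ is small. It follows that $s^{-1/2}B_t\to E_1$ in $\dH$. Diameter is continuous and $\diam E_1>0$, and dilation does not change the a-shape. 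Hence $\dA([B_t],[E_1])\to0$: normalise both sets by their diameters and apply the same estimate as in Corollary~\ref{cor:cont}, where renormalisation is Lipschitz on sets with diameter bounded below. The limit of $[B_t]$ exists, so by Definition~\ref{def:tail} the tail is the single point $[E_1]$.

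I expect the main obstacle to be justifying that the section near $p$ really is the sublevel set of the graph function, with the correct sign. This needs the implicit function theorem applied to the $C^2$ boundary at a point where the normal is $u$, and it needs $p$ to be the unique last support point, so that no other part of $B$ reaches near height $f$. The second point follows from strict convexity near $p$ together with compactness: points of $B$ away from $U$ have height at most $f-\delta$. The squeezing step is routine once uniform $o(1)$ control on a fixed ball is in hand. It uses the a priori bound $s^{-1/2}B_t\subseteq \overline B(0,c^{-1/2})$, which lets me take $C=c^{-1/2}$.
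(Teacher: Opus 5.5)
Your proof is correct and follows essentially the paper's route: you write $\partial B$ near $p$ as a graph with positive-definite quadratic part, squeeze the rescaled sections $s^{-1/2}B_{f-s}$ between slightly shrunk and slightly enlarged copies $E_{1\mp\eps}$ of the second-fundamental-form ellipsoid, and conclude $[B_t]\to[E_1]$. You control the Taylor remainder additively on a fixed ball where the paper uses the multiplicative bound $\delta Q$; your localisation step (unique maximiser plus compactness, and $\phi\ge c|y|^2$ confining the section to the chart) fills in a detail the paper leaves implicit, and the only slip is cosmetic, namely $u=-e_n$ should read $u=e_n$ given $B\subseteq\{x_n\le0\}$ and $B_{f-s}=\{\phi\le s\}$.
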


\begin{proof}
Choose coordinates with $p$ at the origin, $u=e_n$, and $\partial B$ the graph
$x_n=f-Q(x')+o(|x'|^2)$ where $Q$ is a positive-definite quadratic form (the second
fundamental form).  Since $Q$ is positive definite, for each $\delta>0$
there is $r_\delta>0$ with $|o(|x'|^2)|\le\delta\,Q(x')$ on $|x'|\le r_\delta$; for
small $\eps$ the section $B_{f-\eps}$ is then sandwiched between the ellipsoids
$(1+\delta)^{-1/2}\eps^{1/2}E$ and $(1-\delta)^{-1/2}\eps^{1/2}E$, where
$E=\{Q\le1\}$.  Letting $\eps\to0$ and then $\delta\to0$, $[B_t]\to[E]$, a single
point.
\end{proof}

The mechanism is visible in both proofs: at a conical contact the rescaled sections
$(B_{f-s}-p')/s$ converge outright, and at a quadratic contact the sections, though
large at scale $s$, converge after their own normalisation.  The next proposition
isolates what boring behaviour needs in general.  Translate so that the last support
point is $p=0$, write $B_{f-s}$ for the section at depth $s>0$ and $w_{u'}(s)$ for
its width in the transverse direction $u'$.

\begin{proposition}[Non-boring requires a vertical tangent]\label{prop:degenerate}
For every transverse direction $u'$ the ratio $w_{u'}(s)/s$ is nonincreasing in $s$
and converges, as $s\downarrow0$, to a limit $L_{u'}\in(0,\infty]$.  If
$L_{u'}<\infty$ for every $u'$, then $(B,\rho)$ is boring.  In particular, at a
non-boring point some transverse width vanishes with a vertical tangent:
$w_{u'}(s)/s\to\infty$.
\end{proposition}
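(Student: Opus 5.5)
Normalise as in the statement: $p=0$, $u=e_n$, $f=0$, and $B_{-s}$ is the section at depth $s>0$, viewed in $\Rnm$. For a transverse unit vector $u'$ put $g_{u'}(s)=h_{B_{-s}}(u')$, so that $w_{u'}(s)=g_{u'}(s)+g_{-u'}(s)$. The whole argument rests on one convexity observation. For $x\in B_{-s}$ and $\lambda\in[0,1]$, the point $\lambda x+(1-\lambda)p=\lambda x$ lies in $B$ at depth $\lambda s$. Hence
\[
  \lambda B_{-s}\subseteq B_{-\lambda s}, \qquad\text{equivalently}\qquad \tfrac1s B_{-s}\subseteq \tfrac1{s'}B_{-s'}\quad(0<s'\le s).
\]
Taking support functions gives $g_{u'}(\lambda s)\ge\lambda g_{u'}(s)$, so $g_{u'}(s)/s$ is nonincreasing in $s$. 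Adding the same inequality for $-u'$ shows that $w_{u'}(s)/s$ is nonincreasing. Working directly with this inclusion avoids any question of whether the concave function $g_{u'}$ of Lemma~\ref{lem:lipschitz} extends continuously to the endpoint.

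Since $B$ has non-empty interior, every section at depth $s\in(0,f-a)$ is full-dimensional, so $w_{u'}(s)>0$. A nonincreasing positive function has a limit $L_{u'}\in(0,\infty]$ as $s\downarrow0$, which proves the first sentence.

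For the boring criterion, set $K_s=\tfrac1s B_{-s}$. By the inclusion above, this is a family of convex bodies that increases as $s\downarrow0$, and every $K_s$ contains $0=p$. Because $0\in K_s$, each $g_{u'}(s)\ge0$, so $h_{K_s}(u')\le w_{u'}(s)/s\le L_{u'}$.

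Assume $L_{u'}<\infty$ for all $u'$, and apply this to the $2(n-1)$ vectors $\pm e_i$. Then all $K_s$ lie in a fixed box, and $K=\overline{\bigcup_s K_s}$ is a compact convex set.

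I would then show $K_s\to K$ in $\dH$. The support functions $h_{K_s}$ increase pointwise to $h_K$ as $s\downarrow0$, and all of them are continuous on the sphere. Dini's theorem therefore upgrades this to uniform convergence, which is Hausdorff convergence by the support-function identity from the Conventions. Alternatively, one can use that the Blaschke selection theorem together with monotonicity forces every subsequential limit to equal $K$.

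Next, $K\supseteq K_{s_0}$ for any fixed $s_0$, and $K_{s_0}$ has interior, so $\diam K>0$. Hence $[K]\in\A_{n-1}$, and $\dA([B_{-s}],[K])=\dA([K_s],[K])\to0$. Here we use that dilation does not change the a-shape, and that $\dA$ is controlled by $\dH$ after renormalising by the diameter, which is continuous and bounded below near $K$, as in Corollary~\ref{cor:cont}. Thus $[B_t]$ converges as $t\uparrow f$, and $(B,\rho)$ is boring. The final sentence of the proposition is the contrapositive.

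The one step needing care is the uniform boundedness: "$L_{u'}<\infty$ for every $u'$" must be turned into a bound on the $K_s$ themselves. This is why $0\in K_s$ is used, which turns width bounds into support-function bounds in finitely many directions. The upgrade from pointwise to Hausdorff convergence via Dini is the other point to state explicitly.
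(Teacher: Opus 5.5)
Your overall route is the paper's. The rescaled sections $K_s=B_{f-s}/s$ increase as $s\downarrow0$; your cone inclusion $\lambda B_{-s}\subseteq B_{-\lambda s}$ is the same fact as the paper's concavity of $D(\cdot,u')$ with $D(0,u')=0$. Under the hypothesis they are bounded, Dini upgrades monotone convergence of support functions to Hausdorff convergence, and $\diam K\ge\diam K_{s_0}>0$ lets you renormalise. But the step you single out as the crux is false as stated: $K_s$ need not contain $0$. The apex $p$ projects to the transverse origin, but nothing places the vertical segment below $p$ inside $B$. For the oblique cone $B=\conv\bigl(\{0\}\cup(\overline B(2e_1,1)\times\{-1\})\bigr)$ one has $K_s=\overline B(2e_1,1)$ for all $s\in(0,1]$. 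Hence $g_{-e_1}(s)=-s<0$ and $L_{e_1}=2$, yet $h_{K_s}(e_1)=3$. So the inequality $h_{K_s}(u')\le L_{u'}$ fails, and so does the box built from it.

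The repair uses only your own inclusion. Fix $s_0$ and a point $c\in K_{s_0}$. Then $c\in K_s$ for every $s\le s_0$, so $h_{K_s}(-u')\ge-\langle c,u'\rangle$ and
\[
  h_{K_s}(u')=\frac{w_{u'}(s)}{s}-h_{K_s}(-u')\le L_{u'}+\langle c,u'\rangle\qquad(s\le s_0),
\]
while $K_s\subseteq K_{s_0}$ for $s\ge s_0$. This is exactly the paper's argument in other clothing. Each one-sided ratio $g_{\pm u'}(s)/s$ is nonincreasing, hence bounded below by its value at $s_0$, so neither one-sided limit $L(\pm u')$ is $-\infty$; since their sum $L_{u'}$ is finite, both are finite. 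The paper then takes the finite sublinear function $L$ directly as the support function of the limit body, rather than forming $\overline{\bigcup_sK_s}$; that difference is cosmetic. With this correction the rest of your proof goes through unchanged.
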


\begin{proof}
Each section support $D(s,u')=h_{B_{f-s}}(u')$ is concave in $s$
(Lemma~\ref{lem:convexbody}) with $D(0,u')=0$, so $D(s,u')/s$ is nonincreasing in
$s$ and increases, as $s\downarrow0$, to a limit $L(u')\in(-\infty,+\infty]$; the
same applies to the width $w_{u'}=D(\cdot,u')+D(\cdot,-u')$, and here
$w_{u'}(s)/s\ge w_{u'}(s_0)/s_0>0$ for $s\le s_0$, interior sections being
full-dimensional, so the limit $L_{u'}=L(u')+L(-u')$ lies in $(0,\infty]$.

Suppose every $L_{u'}$ is finite.  Then every $L(u')$ is finite: neither $L(u')$
nor $L(-u')$ is $-\infty$, each exceeding the value of the corresponding
$D(s_0,\cdot)/s_0$, and their sum is finite.  The function $L$ is thus a finite
pointwise limit of the support functions of the rescaled sections
$K_s=B_{f-s}/s$, increasing as $s\downarrow0$, hence sublinear and finite: the
support function of a compact convex set $C$.  Support dominance is inclusion, so
the $K_s$ increase to $C$ as $s\downarrow0$, and they converge to it in $\dH$
because the monotone convergence $h_{K_s}\uparrow h_C$ of continuous functions on
the sphere is uniform (Dini).  Moreover $\diam C\ge\diam K_{s_0}>0$.
Renormalising (Corollary~\ref{cor:cont}), $[B_{f-s}]=[K_s]\to[C]$: the tail is the
single point $[C]$ and $(B,\rho)$ is boring.
\end{proof}

\begin{remark}\label{rem:cusp}
The vertical tangent is necessary but far from sufficient: at a smooth point of
positive curvature every width behaves like $\sqrt{s}$, so every transverse
direction has a vertical tangent, yet the point is boring
(Proposition~\ref{prop:smooth}).  Nor can any condition on the widths alone
characterise the non-boring points, because widths do not determine the a-shape:
interpolating constant-width sections---a disk and a Reuleaux triangle,
say---through the construction of Section~\ref{sec:universal} produces a non-boring
point every one of whose sections has constant width, all width ratios identically
$1$.  What the constructions of Section~\ref{sec:universal} do build is a
quantitatively fast vertical tangent, $s/w(s)\to0$, the steepness that drives the
grafting estimates.
\end{remark}

\section{Realisation and total variation}\label{sec:realize}

We turn to the inverse problem: which abstract paths $P\colon[0,\infty)\to\Ac_{n-1}$
arise, up to reparameterisation, as non-terminating cross-section paths $P_{B,\rho}$?

Recall that the \emph{total variation} of a map $q\colon[\alpha,\beta]\to M$ into a
metric space is
\[
  \Var_{[\alpha,\beta]}(q)=\sup\sum_{i=1}^m d\bigl(q(a_{i-1}),q(a_i)\bigr),
\]
the supremum over subdivisions $\alpha=a_0<\dots<a_m=\beta$.  A path is
\emph{locally rectifiable} if it has finite total variation over every compact
subinterval.  Total variation is invariant under monotone reparameterisation, so
local rectifiability is a property of the oriented path, independent of parameter.

\begin{theorem}[Necessity]\label{thm:necessity}
If a continuous path $P\colon[0,\infty)\to\Ac_{n-1}$ is realisable as a
non-terminating cross-section path $P_{B,\rho}$ up to reparameterisation, then $P$ is
locally rectifiable.
\end{theorem}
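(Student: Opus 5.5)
The plan is to reduce local rectifiability of $P$ to the quantitative Lipschitz bound already available for cross-section paths, namely Lemma~\ref{lem:lipschitz} and Corollary~\ref{cor:cont}. Suppose $P=P_{B,\rho}\circ\varphi$, where $\varphi$ is an increasing homeomorphism from $[0,\infty)$ onto $[t_*,f)$. The value $P(0)$ is the a-shape of a genuine cross-section lying in $\Ac_{n-1}$, so I will take $t_*\in(a,f)$. If the intended reparameterisation is instead onto $(a,f)$ with $0$ corresponding to an interior height, the argument is the same. Since total variation is invariant under monotone reparameterisation, as recalled before the theorem, it suffices to show the following: for every compact $[t_0,t_1]\subset(a,f)$ the path $t\mapsto[B_t]$ has finite variation on $[t_0,t_1]$. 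Indeed, any compact $[\alpha,\beta]\subset[0,\infty)$ is carried by $\varphi$ onto such a $[t_0,t_1]=[\varphi(\alpha),\varphi(\beta)]$, because $\varphi(\beta)<f$.

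The key step is to upgrade ``locally Lipschitz'' to a single Lipschitz constant on $[t_0,t_1]$. I will argue directly rather than by compactness. Lemma~\ref{lem:lipschitz} gives $\dH(B_t,B_s)\le (2R/\eta)|t-s|$ on $[t_0,t_1]$ with $\eta=\min(t_0-a,f-t_1)$. As in Corollary~\ref{cor:cont}, $\diam B_t\ge\delta>0$ on $[t_0,t_1]$, and all $B_t$ lie in a fixed ball of radius $R$. The renormalisation $K\mapsto(K-\st K)/\diam K$ is Lipschitz on this family, with a constant $M$ depending on $c$, $R$ and $\delta$. Moreover $\dA([K],[L])$ is at most the Hausdorff distance of the renormalised bodies, since these are unit representatives. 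Hence
\[
\dA([B_t],[B_s])\le M\cdot\frac{2R}{\eta}\,|t-s|\qquad (t,s\in[t_0,t_1]).
\]

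Summing this bound over any subdivision $t_0=a_0<\dots<a_m=t_1$ telescopes to at most $(2MR/\eta)(t_1-t_0)$. So $\Var_{[t_0,t_1]}$ is finite, and pulling back through $\varphi$ gives finite variation of $P$ on $[\alpha,\beta]$. The only point I expect to need care is the verification of the Lipschitz constant $M$ for the renormalisation. Writing
\[
\frac{K}{\diam K}-\frac{L}{\diam L}=\frac{K-L}{\diam K}+L\Bigl(\frac1{\diam K}-\frac1{\diam L}\Bigr)
\]
in support-function form bounds the difference by $\dH(K,L)/\delta+R\cdot 2\dH(K,L)/\delta^2$, plus the Steiner term $c\,\dH(K,L)/\delta$. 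This is routine.

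A second point needing care is the convention that the realising homeomorphism maps compact parameter intervals into compact height intervals bounded away from $a$. This holds because a non-terminating path is defined on all of $[0,\infty)$, with $\infty$ corresponding to $f$. Non-rectifiability can thus only occur at $f$, and the statement asks only for local rectifiability.
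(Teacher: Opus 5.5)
Your proof is correct and is essentially the paper's argument: a compact parameter interval is carried by the reparameterisation to a compact height interval $[t_0,t_1]\subset(a,f)$. There Corollary~\ref{cor:cont} (Lemma~\ref{lem:lipschitz} plus renormalisation) gives a Lipschitz constant $L$, so the variation is at most $L(t_1-t_0)$, and invariance under monotone reparameterisation finishes. Your explicit renormalisation constant is a harmless elaboration, and the Steiner term is not even needed: $\dA$ already minimises over translations, so $\dA([K],[L])\le\dH(K/\diam K,\,L/\diam L)$ suffices.
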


\begin{proof}
Suppose $P=P_{B,\rho}\circ\phi$ for an increasing reparameterisation $\phi$ carrying a
compact interval $[\alpha,\beta]\subset[0,\infty)$ onto a compact interval
$[t_0,t_1]\subset(a,f)$.  By Corollary~\ref{cor:cont} the map $t\mapsto[B_t]$ is
Lipschitz on $[t_0,t_1]$, say with constant $L$, so
$\Var_{[t_0,t_1]}(t\mapsto[B_t])\le L(t_1-t_0)<\infty$.  Total variation is
reparameterisation invariant, whence $\Var_{[\alpha,\beta]}(P)<\infty$.  As
$[\alpha,\beta]$ was arbitrary, $P$ is locally rectifiable.
\end{proof}

Necessity is the clean half of the story, and it already carries content: it
forbids, for example, realising a space-filling path in $\A_{n-1}$ as an honest
cross-section path.  The values of $P$ must moreover be full-dimensional, since
interior sections of a body with interior are full-dimensional; this is why $\Ac_{n-1}$,
not $\A_{n-1}$, is the natural target.

For sufficiency we record what the constructive method of the next section yields.  A
path is \emph{piecewise Minkowski-linear} if, on each compact subinterval, it is a
finite concatenation of segments of the form $s\mapsto[(1-s)K_0+sK_1]$ (Minkowski
interpolation of unit representatives).  These paths are locally rectifiable and are
dense, in the sense that every continuous path is a uniform limit on compacta of
piecewise Minkowski-linear paths; this rests on the following elementary estimate.

\begin{lemma}[Minkowski segments are short]\label{lem:mlshort}
For a-shapes $s_0,s_1$ with unit representatives $K_0,K_1$, optimally translated,
the Minkowski segment $s_\lambda=[(1-\lambda)K_0+\lambda K_1]$, $\lambda\in[0,1]$,
satisfies $\dA(s_\lambda,s_0)\le C_n\,\lambda\,\dA(s_0,s_1)$, with $C_n$ depending
only on the dimension.  Consequently a piecewise Minkowski-linear path through the
shapes of an $\eps$-dense chain of samples of a continuous path $P$ stays uniformly
within $(C_n+1)\eps$ of $P$.
\end{lemma}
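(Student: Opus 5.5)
We will work throughout at the level of unit representatives and control the Minkowski segment in $\dH$ before passing to $\dA$. Fix $K_0,K_1$ with $\diam K_i=1$, translated so that $\dH(K_0,K_1)=\dA(s_0,s_1)=:d$. Put $M_\lambda=(1-\lambda)K_0+\lambda K_1$. The support function is linear under Minkowski combination, $h_{M_\lambda}=(1-\lambda)h_{K_0}+\lambda h_{K_1}$, so the support-function identity for $\dH$ gives
\[
  \dH(M_\lambda,K_0)=\lambda\sup_u|h_{K_1}(u)-h_{K_0}(u)|=\lambda d .
\]
The only remaining task is to renormalise $M_\lambda$ to unit diameter without losing more than a constant factor.

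Since the diameter is $2$-Lipschitz for $\dH$, we have $|\diam M_\lambda-1|\le 2\lambda d$. We split into two cases.

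\emph{Case $d\le 1/4$.} Then $\diam M_\lambda\ge 1/2$. The unit representative $M_\lambda/\diam M_\lambda$ satisfies
\[
  \dH\Bigl(\tfrac{M_\lambda}{\diam M_\lambda},M_\lambda\Bigr)\le\Bigl|\tfrac{1}{\diam M_\lambda}-1\Bigr|\sup_{x\in M_\lambda}|x-x_0|
\]
after centring at a point $x_0\in M_\lambda$ (translation is free in $\dA$). The sup is at most $\diam M_\lambda\le 3/2$, so this term is at most $2\cdot 2\lambda d\cdot\tfrac32$. Adding $\dH(M_\lambda,K_0)=\lambda d$ gives $\dA(s_\lambda,s_0)\le 7\lambda d$. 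To handle the centring cleanly, we center both sets at the Steiner point. Then $|\st M_\lambda-\st K_0|\le c\lambda d$, and the translation costs at most $c\lambda d$ more, which absorbs into $C_n$.

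\emph{Case $d>1/4$.} Here we use the crude bound $\dA\le 2$ (unit representatives sharing a point lie within distance $1$ of it). This gives $\dA(s_\lambda,s_0)\le 2\le 8d$ whenever $\lambda$ is not tiny. When $\lambda$ is tiny, the previous argument still applies, because what it actually needs is $\lambda d\le 1/4$ rather than $d\le 1/4$. If instead $\lambda d>1/4$, then $2\le 8\lambda d$. So in either sub-case $C_n=\max(8,\,7+2c)$ works. Note that $\diam M_\lambda>0$ always holds, by the width argument of Proposition~\ref{prop:open-connected}.

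For the consequence, take samples $0=\tau_0<\tau_1<\dots$ on a compact interval such that consecutive shapes satisfy $\dA(P(\tau_{i-1}),P(\tau_i))\le\eps$, and such that $P$ oscillates by at most $\eps$ on each $[\tau_{i-1},\tau_i]$. Such samples exist by uniform continuity on compacta. On $[\tau_{i-1},\tau_i]$, let the new path run along the Minkowski segment from $P(\tau_{i-1})$ to $P(\tau_i)$, linearly in the parameter. By the first part, its value at any point is within $C_n\eps$ of $P(\tau_{i-1})$, which is within $\eps$ of $P$ there. Hence the two paths stay within $(C_n+1)\eps$.

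The main obstacle is the renormalisation step. One must keep the diameter bounded below and choose the translation so that the dilation does not magnify distances; the Steiner-point centring together with the large-$\lambda d$ case split is what handles both.
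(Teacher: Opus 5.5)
Your proof is correct and follows the paper's route: linearity of support functions gives $\dH(M_\lambda,K_0)=\lambda d$, the diameter is $2$-Lipschitz, you split on $\lambda d\le\tfrac14$ versus $\lambda d>\tfrac14$ (using the crude bound on $\dA$ in the latter case), and the consequence is the same triangle-inequality argument. The one difference is in the renormalisation: you dilate explicitly about a point of $M_\lambda$ instead of citing the Lipschitz property of $K\mapsto(K-\st K)/\diam K$, and this already gives $\dA(s_\lambda,s_0)\le 7\lambda d$ with an absolute constant, so your extra Steiner-point centring is superfluous, as is your preliminary split on $d$ once you switch to the split on $\lambda d$.
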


\begin{proof}
Put $\delta=\dA(s_0,s_1)=\dH(K_0,K_1+v)$ for the optimal translation $v$, and
$L_\lambda=(1-\lambda)K_0+\lambda(K_1+v)$.  Then
$\dH(L_\lambda,K_0)=\lambda\sup_{u}|h_{K_1+v}(u)-h_{K_0}(u)|=\lambda\delta$ and
$|\diam L_\lambda-1|\le2\lambda\delta$.  If $\lambda\delta\le\tfrac14$ then
$\diam L_\lambda\ge\tfrac12$, and the renormalisation
$K\mapsto(K-\st K)/\diam K$ is Lipschitz, with a dimensional constant, on bodies of
diameter $\ge\tfrac12$ inside a fixed ball (as in the proof of
Corollary~\ref{cor:cont}); hence $\dA(s_\lambda,s_0)\le C_n'\lambda\delta$.  If
$\lambda\delta>\tfrac14$ the claim is trivial, since $\dA\le2$ always.  The second
statement follows from the triangle inequality: between consecutive samples the
segment stays within $C_n\eps$ of its endpoints, which are within $\eps$ of $P$.
\end{proof}

\begin{theorem}[Sufficiency; approximate and structured forms]\label{thm:sufficiency}
Let $P\colon[0,\infty)\to\Ac_{n-1}$ be continuous.
\begin{enumerate}
\item\emph{(Approximation.)}  For every compact $[\alpha,\beta]$ and every $\eps>0$
there is a non-terminating $P_{B,\rho}$ and a reparameterisation with
$\dA\bigl(P(\tau),P_{B,\rho}(\cdot)\bigr)<\eps$ throughout $[\alpha,\beta]$.
\item\emph{(Exact realisation on a dense class.)}  If $P$ is piecewise
Minkowski-linear then it is realisable exactly, up to reparameterisation.
\end{enumerate}
In particular there exist $B,\rho$ with $T(B,\rho)=\A_{n-1}$.
\end{theorem}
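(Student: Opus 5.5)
The construction is by stacking: I would build $B$ as the convex hull (equivalently, a union of frusta) of sections prescribed at a sequence of heights increasing to $f$. The basic observation is this. If $K_0,K_1$ are bodies in $\Rnm$ placed at heights $t_0<t_1$, then $\conv(K_0\times\{t_0\}\cup K_1\times\{t_1\})$ has, at height $(1-\lambda)t_0+\lambda t_1$, exactly the section $(1-\lambda)K_0+\lambda K_1$. A single Minkowski segment is therefore realised exactly, with the affine reparameterisation $\lambda\leftrightarrow t$, as the cross-section path of a frustum. So I would prove (2) first. Write the piecewise Minkowski-linear path as a concatenation of segments between shapes $s_0,s_1,s_2,\dots$. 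Choose representatives $c_kK_k+v_k$, with scales $c_k\downarrow0$, translations $v_k\to0$, and heights $t_k\uparrow f$. Then stack the frusta $F_k$ between consecutive levels and set $B=\overline{\bigcup_k F_k}$, which is $\bigcup F_k$ together with the apex point $p$ at height $f$.

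The main obstacle is convexity of the union. It holds if and only if, for every transverse $u'$, the piecewise-linear function $t\mapsto h_{B_t}(u')$ is concave, i.e.\ the slopes $\bigl(h_{c_{k+1}K_{k+1}+v_{k+1}}(u')-h_{c_kK_k+v_k}(u')\bigr)/(t_{k+1}-t_k)$ are nonincreasing in $k$, uniformly in $u'$. I would secure this by letting the scales decay fast and making the height gaps shrink even faster, so that the slopes are dominated by the negative term $-c_k h(\cdot)/\Delta t_k$. Concretely, place every $K_k$ with $\st K_k=0$ and take $v_k=0$, so that by Steiner-centredness each $K_k$ contains a ball of radius $r_k>0$ around $0$, these being full-dimensional shapes. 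Then choose $c_{k+1}\le \tfrac12 c_k$ and $\Delta t_{k}$ recursively. The slope on $[t_k,t_{k+1}]$ is at most $-(c_k r_k-c_{k+1}\cdot 1)/\Delta t_k$ and at least $-c_k/\Delta t_k$ in magnitude terms, so shrinking $\Delta t_{k+1}$ relative to $\Delta t_k$ by a factor depending on $r_k,r_{k+1}$ makes the slopes strictly decreasing. I also need $\sum\Delta t_k<\infty$, which gives finite $f$, and concavity at $t_0$, where I cap the bottom with a cone to a point below. Since $c_k\to0$, all sections shrink to the single point $p$, so the path is non-terminating, and the exact reparameterisation is piecewise affine. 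For a path on $[0,\infty)$ that is Minkowski-linear only on compacta, infinitely many segments occur, and the same recursion handles them.

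For (1), given $P$ and $[\alpha,\beta]$, I would sample $P$ at an $\eps/(C_n+1)$-dense chain by uniform continuity. By Lemma~\ref{lem:mlshort}, the piecewise Minkowski-linear path through these samples stays within $\eps$ of $P$, and I extend it arbitrarily (e.g.\ constantly) beyond $\beta$. Applying (2) realises it exactly, which gives (1).

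For the final claim, $\A_{n-1}$ is compact and path-connected (Propositions~\ref{prop:compact}, \ref{prop:open-connected}), and $\Ac_{n-1}$ is dense. I would choose a countable dense set $\{q_j\}\subset\Ac_{n-1}$ and form a piecewise Minkowski-linear path visiting $q_1;q_1,q_2;q_1,q_2,q_3;\dots$ in successive rounds, each round taking infinite time. Each $q_j$ is then visited at arbitrarily late times, so the $\omega$-limit set, which is the tail, contains the closure $\A_{n-1}$. Realising this path via (2) gives $T(B,\rho)=\A_{n-1}$. The Minkowski segment between full-dimensional shapes stays full-dimensional, so the values lie in $\Ac_{n-1}$ as required.
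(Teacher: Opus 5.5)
Your construction is essentially the paper's. Both stack prescribed sections at heights $t_k\uparrow f$, with sizes and gaps shrinking fast, so that the hull has the designed sections at the designed heights and Minkowski interpolants between them. The difference is only in bookkeeping: you check convexity directly, through Lemma~\ref{lem:convexbody} and the slopes of the piecewise-linear section supports, where the paper uses the non-disturbing sets of Lemma~\ref{lem:nondisturb} and Lemma~\ref{lem:envelope}, which encode the same slope condition. Part (1) via Lemma~\ref{lem:mlshort}, and the final assertion via a path revisiting a dense sequence (which is Theorem~\ref{thm:singlenu}), also match the paper.

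One step fails as written, though. The condition $c_{k+1}\le\tfrac12c_k$ does not make your upper slope bound $-(c_kr_k-c_{k+1})/\Delta t_k$ negative, because $r_k\le\tfrac12$ and $r_k$ may be tiny. Suppose $K_k$ is thin in a direction $u'$ in which $K_{k+1}$ is long. Then $c_{k+1}h_{K_{k+1}}(u')>c_kh_{K_k}(u')$, so the slope on $[t_k,t_{k+1}]$ is positive. Concavity would then force every earlier slope in direction $u'$ to be at least this large, so $c_jh_{K_j}(u')$ would have to increase in $j$, which halving scales generally forbids. Shrinking $\Delta t_k$ only steepens the offending slope.

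What you need is strict nesting of consecutive designed sections, for example $c_{k+1}\le\tfrac12r_kc_k$; this is the paper's $d_k=\tfrac14\alpha_{k-1}'d_{k-1}$. With that condition, choosing $\Delta t_{k+1}\le\bigl(c_{k+1}r_{k+1}-c_{k+2}\bigr)\Delta t_k/c_k$ makes the slopes decrease and the gaps decay geometrically, and the rest of your argument goes through.

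Two minor points. First, when $c_k\ne c_{k+1}$ the map from height to segment parameter on a frustum is projective, $\mu=\lambda c_{k+1}/\bigl((1-\lambda)c_k+\lambda c_{k+1}\bigr)$, not affine; this is harmless, since it is still an increasing homeomorphism. Second, in the final assertion each round should take finite time, with infinitely many rounds filling $[0,\infty)$.
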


Both statements rest on the non-disturbing calculus of
Section~\ref{sec:universal} (Lemma~\ref{lem:nondisturb}): prescribed sections are
stacked at a sequence of heights and realised exactly---the recursion of
Theorem~\ref{thm:singlenu} constrains only their sizes and gaps, never their
shapes---with convexity guaranteed by the concavity of the section-support envelope
(Lemma~\ref{lem:envelope}).  Between consecutive designed heights the sections of
the hull are exactly the Minkowski interpolants of the designed ones
(Lemma~\ref{lem:envelope} again), so the realised path is the piecewise
Minkowski-linear path through the designed shapes, up to reparameterisation:
taking these to be the corner shapes of $P$ proves (2).  For (1), realise instead
a piecewise Minkowski-linear path through a fine sample of $P$, which approximates
$P$ uniformly on compacta by Lemma~\ref{lem:mlshort}.  The final assertion
$T(B,\rho)=\A_{n-1}$ is Theorem~\ref{thm:singlenu}.

\begin{remark}[On exact realisation in general]\label{rem:exactopen}
Whether \emph{every} continuous locally rectifiable $P$ is realisable exactly (not
merely approximately) is more delicate than the necessity direction, and we do not
claim it here.  The obstruction is visible through
Lemma~\ref{lem:convexbody} below: prescribing the sections
$K_t=\Phi(t)\,R_{\tau(t)}$ with $R_t$ a continuously varying unit representative
requires $t\mapsto\Phi(t)\,h_{R_{\tau(t)}}(u')$ to be concave for every transverse
$u'$, and a convex corner of the shape-path in some direction $u'$ can be absorbed
only by a compensating concave corner of the common scale $\Phi$.  Finitely many
corners on a compact interval---the piecewise Minkowski-linear case---are absorbed in
this way; the general locally rectifiable case, where corners may accumulate, fails, as we
show next.
\end{remark}

\subsection*{Exact realisation fails in general}
The obstruction of Remark~\ref{rem:exactopen} cannot be circumvented, and it is second-order: it
is invisible to the total variation of the path.  We extract from a realising body a
family of intrinsic functionals of its section shapes, show that realisation forces
on each a regularity of bounded-turning type---a derivative of locally bounded
variation, after a common reparameterisation---and exhibit a rectifiable path for
which no reparameterisation achieves it.

For a finite nonnegative measure $\mu$ on $\Sp$ with barycentre $0$
(\,$\int u'\,d\mu(u')=0$\,) and a compact convex $K\subseteq\Rnm$ with nonempty
interior, put $W_\mu(K)=\int h_K(u')\,d\mu(u')$.  Then $W_\mu$ is translation
invariant, positively homogeneous, and positive when $\mu\neq0$ (a ball
$\overline B(x_0,r)\subseteq K$ gives $W_\mu(K)\ge r\mu(\Sp)$).  Consequently, for
$\mu,\nu\neq0$ the ratio $W_\mu/W_\nu$ descends to a continuous positive function on
$\Ac_{n-1}$.  Taking $\mu=\delta_{u'}+\delta_{-u'}$ gives the width in direction
$u'$, and width ratios are the only functionals the counterexample needs.

\begin{lemma}\label{lem:mixedwidth}
\begin{enumerate}
\item[(a)] For a body $B$, a direction $\rho$ with height interval $(a,f)$, and any
finite nonnegative $\mu$ on $\Sp$, the mixed width $t\mapsto W_\mu(B_t)$ is concave
and positive on $(a,f)$.
\item[(b)] If $g,h$ are concave on an open interval $I$ and $h\ge\beta>0$ on $I$,
then on every compact $[\alpha,\alpha']\subset I$ there is a function $\gamma$ of
bounded variation with
\[
  \frac{g}{h}(x)\;=\;\frac{g}{h}(\alpha)+\int_\alpha^x\gamma(t)\,dt
  \qquad(\alpha\le x\le\alpha').
\]
\item[(c)] Hence if $P=P_{B,\rho}\circ\varphi$ exactly, then for all barycentre-$0$
$\mu,\nu\neq0$ the intrinsic ratio $s\mapsto W_\mu(P(s))/W_\nu(P(s))$, composed with
$\varphi^{-1}$, admits the integral representation of (b) on every compact
subinterval of $(a,f)$.
\end{enumerate}
\end{lemma}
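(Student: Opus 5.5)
For (a), I would write $W_\mu(B_t)=\int g_{u'}(t)\,d\mu(u')$, where $g_{u'}(t)=h_{B_t}(u')$ is the section support function from the proof of Lemma~\ref{lem:lipschitz}. That proof already shows each $g_{u'}$ is concave on $(a,f)$, and it is bounded by $R$. Concavity is preserved under integration against a nonnegative measure, so $W_\mu(B_t)$ is concave. Measurability in $u'$ is no issue: $h_{B_t}$ is continuous on $\Sp$. For positivity I would use that $B_t$ is full-dimensional for $t\in(a,f)$ and so contains a ball $\overline B(x_0,r)$. Then $h_{B_t}(u')\ge\langle x_0,u'\rangle+r$, and the barycentre-$0$ hypothesis gives $W_\mu(B_t)\ge r\mu(\Sp)>0$, as in the remark preceding the lemma. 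Without the barycentre hypothesis I would only assert positivity in the barycentre-$0$ case, which is the one used in (c), and nonnegativity after translation otherwise.

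For (b), I would use the standard structure of concave functions. On an open interval a concave function is locally Lipschitz and absolutely continuous on compacta. Its right derivative $g'_+$ is nonincreasing, hence of bounded variation on $[\alpha,\alpha']\subset I$, and it is bounded there by slopes taken just outside the interval. Then $g(x)=g(\alpha)+\int_\alpha^x g'_+$, and likewise for $h$. The quotient $g/h$ is a product of Lipschitz functions, since $1/h$ is Lipschitz because $h\ge\beta$. So $g/h$ is absolutely continuous, and a.e.
\[
  \Bigl(\frac gh\Bigr)'=\frac{g'_+h-gh'_+}{h^2}=:\gamma .
\]
Products and sums of bounded BV functions are BV. The function $1/h^2$ is Lipschitz, hence BV. And $g,h$ are Lipschitz, hence BV. So $\gamma$ is BV, and the fundamental theorem of calculus for absolutely continuous functions gives the representation.

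For (c), I would combine (a) and (b) with the identity $P(\varphi^{-1}(t))=[B_t]$. The ratio $W_\mu/W_\nu$ descends to $\Ac_{n-1}$ because both numerator and denominator are translation invariant and positively homogeneous of degree $1$, so the ratio is scale-free. It follows that
\[
  \frac{W_\mu(P(\varphi^{-1}(t)))}{W_\nu(P(\varphi^{-1}(t)))}=\frac{W_\mu(B_t)}{W_\nu(B_t)} .
\]
This is a quotient of concave functions of $t$ on $(a,f)$, by (a). On a compact $[\alpha,\alpha']\subset(a,f)$ the denominator is continuous and positive, so it is bounded below by some $\beta>0$. I would apply (b) on a slightly larger open interval where the bound still holds.

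The main obstacle is the bookkeeping of direction: which of $\varphi$ and $\varphi^{-1}$ is composed with what. Once that is straight, the only genuinely analytic point is that the one-sided derivative of a concave function is BV on compact subintervals. I would record that fact explicitly rather than cite it.
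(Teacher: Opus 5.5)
Your proposal is correct and follows the paper's proof essentially step for step. It integrates the concave section supports against $\mu$, uses monotone (hence BV) right derivatives to get $\gamma=(g'_+h-gh'_+)/h^2$ via the a.e.\ quotient rule and the fundamental theorem of calculus, and cancels translation and scale in $W_\mu/W_\nu$. Your two refinements are sound and more careful than the written proof: positivity in (a) does require $\mu\neq0$ with barycentre $0$ (the paper uses this standing assumption tacitly), and in (c) passing to an open interval slightly larger than $[\alpha,\alpha']$ with compact closure in $(a,f)$ is what supplies the lower bound $\beta>0$ that (b) demands.
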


\begin{proof}
(a)  Each $t\mapsto h_{B_t}(u')$ is concave on $(a,f)$ (Lemma~\ref{lem:convexbody},
applied to $B$ sliced over $(a,f)$), and the concavity inequality integrates against
the nonnegative $\mu$.  Positivity holds because interior sections are
full-dimensional.

(b)  On $[\alpha,\alpha']$ the concave $g,h$ are Lipschitz; their right derivatives
$g',h'$ are nonincreasing and bounded, hence of bounded variation, and $h\ge\beta$.
Thus $\gamma:=(g'h-gh')/h^2$ is built from bounded functions of bounded variation by
sums, products, and division by a function bounded away from $0$, so $\gamma$ has
bounded variation; and $g/h$ is Lipschitz with derivative $\gamma$ almost everywhere,
so the fundamental theorem of calculus gives the representation.

(c)  On sections, scale and translation cancel in the ratio:
the ratio $W_\mu/W_\nu$ evaluated along $P\circ\varphi^{-1}$ equals
$W_\mu(B_t)/W_\nu(B_t)$, a quotient of
positive concave functions by (a); apply (b).
\end{proof}

\begin{lemma}[Zigzags resist every reparameterisation]\label{lem:zigzag}
Let $F\colon[0,\sigma^*]\to\mathbb R$ be continuous, let
$0=\sigma_0<\sigma_1<\cdots\uparrow\sigma^*$, and suppose
$F(\sigma_k)-F(\sigma_{k-1})=(-1)^{k+1}\ell_k$ with $\ell_k>0$ and
$\sum_k\sqrt{\ell_k}=\infty$.  Then for every increasing homeomorphism
$\tau\colon[t_0,t^*]\to[0,\sigma^*]$ there is no function $\gamma$ of bounded
variation on $[t_0,t^*]$ with $F(\tau(t))=F(\tau(t_0))+\int_{t_0}^t\gamma$.
\end{lemma}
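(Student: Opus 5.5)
Argue by contradiction. Suppose $\tau$ and a $\gamma$ of bounded variation on $[t_0,t^*]$ satisfy $G(t):=F(\tau(t))=G(t_0)+\int_{t_0}^t\gamma$. Put $t_k=\tau^{-1}(\sigma_k)$, so $t_0<t_1<\cdots\uparrow t^*$, and write $\Delta_k=t_k-t_{k-1}$, so that $\sum_k\Delta_k=t^*-t_0<\infty$. Note also that $\gamma$ is bounded, since functions of bounded variation are, say $|\gamma|\le M$. The plan is to show that each alternating increment $\ell_k$ forces a definite amount of variation of $\gamma$ on $[t_{k-1},t_k]$. The condition $\sum\sqrt{\ell_k}=\infty$ will then make the total variation infinite.

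\emph{Key local estimate.} On $I_k=[t_{k-1},t_k]$ we have $\int_{I_k}\gamma=(-1)^{k+1}\ell_k$. Hence there is a point $x_k\in I_k$ (a Lebesgue point, or use essential sup/inf) with $(-1)^{k+1}\gamma(x_k)\ge \ell_k/\Delta_k$. Consecutive intervals carry opposite signs, so $\gamma$ changes from a value at least $\ell_k/\Delta_k$ to a value at most $-\ell_{k+1}/\Delta_{k+1}$, or vice versa. This gives
\[
\Var(\gamma)\ \ge\ \sum_k\Bigl(\frac{\ell_k}{\Delta_k}+\frac{\ell_{k+1}}{\Delta_{k+1}}\Bigr)\ \ge\ \sum_k \frac{\ell_k}{\Delta_k},
\]
summing over alternate pairs, with at most a factor $2$ lost. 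The sign alternation is used only to ensure that these large values actually have to be traversed, rather than $\gamma$ sitting at one large value.

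\emph{Combining with the time budget.} By Cauchy--Schwarz, for every $N$,
\[
\Bigl(\sum_{k\le N}\sqrt{\ell_k}\Bigr)^2\le \Bigl(\sum_{k\le N}\frac{\ell_k}{\Delta_k}\Bigr)\Bigl(\sum_{k\le N}\Delta_k\Bigr)\le \Var(\gamma)\,(t^*-t_0).
\]
The left side tends to $\infty$, which contradicts the bounded variation of $\gamma$. Boundedness of $\gamma$ alone would only give $\ell_k\le M\Delta_k$, and that is not enough. The square root arises precisely because variation costs $\ell_k/\Delta_k$ while time costs $\Delta_k$.

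\emph{Main obstacle.} The delicate step is making the pointwise selection rigorous, since $\gamma$ is only defined almost everywhere. I would fix the normalised right-continuous representative of the BV function $\gamma$. Then $\int_{I_k}\gamma=\pm\ell_k$ yields points where $\pm\gamma\ge\ell_k/\Delta_k$, because a function whose values on $I_k$ are all below its average is impossible. I would then evaluate the variation along the partition $x_1<x_2<\cdots$ of the chosen points, one in each $I_k$. Each $|\gamma(x_{k+1})-\gamma(x_k)|$ is at least $\ell_k/\Delta_k+\ell_{k+1}/\Delta_{k+1}$, which justifies the displayed lower bound directly. No uniformity in $\tau$ is needed, since $\tau$ enters only through the $\Delta_k$ and the finiteness of their sum.
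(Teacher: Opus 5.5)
Your proof is correct and is essentially the paper's argument: in each $I_k=\tau^{-1}([\sigma_{k-1},\sigma_k])$ you pick a point where $(-1)^{k+1}\gamma\ge\ell_k/\Delta_k$, use the sign alternation to get $\Var(\gamma)\ge\sum_k\ell_k/\Delta_k$, and finish with Cauchy--Schwarz against $\sum_k\Delta_k=t^*-t_0$. The only cosmetic difference is that the paper takes its points $a_k$ in the interior of $I_k$ (from a set of positive measure) and sums over odd $k$ only. Your closed intervals share endpoints, which is harmless because the opposite signs force $x_k\ne x_{k+1}$.
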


\begin{proof}
Suppose $\gamma$ exists.  Let $I_k=\tau^{-1}([\sigma_{k-1},\sigma_k])$, an interval
of length $\Delta_k$, with $\sum_k\Delta_k=t^*-t_0=:T$.  Since
$\int_{I_k}\gamma=(-1)^{k+1}\ell_k$, the set where $(-1)^{k+1}\gamma\ge\ell_k/\Delta_k$ meets $I_k$ in positive
measure (else the integral over $I_k$ would fall short of $\ell_k$ in size); pick
$a_k$ there, interior to $I_k$.  The points $a_k$ strictly increase, and consecutive values of $\gamma$ at them have
opposite signs, so
\[
  \Var_{[t_0,t^*]}(\gamma)\;\ge\;\sum_{k\ \mathrm{odd}}
  \big|\gamma(a_{k+1})-\gamma(a_k)\big|
  \;\ge\;\sum_{k\ \mathrm{odd}}\Big(\frac{\ell_k}{\Delta_k}
  +\frac{\ell_{k+1}}{\Delta_{k+1}}\Big)
  \;=\;\sum_k\frac{\ell_k}{\Delta_k}.
\]
By Cauchy--Schwarz,
$\big(\sum_{k\le K}\sqrt{\ell_k}\big)^2\le T\sum_{k\le K}\ell_k/\Delta_k$ for every
$K$, so the right side is infinite.
\end{proof}

\begin{theorem}[A rectifiable path that is not a cross-section path]\label{thm:norealize}
For every $n\ge3$ there is a continuous $P\colon[0,\infty)\to\Ac_{n-1}$, Lipschitz in
$\dA$ and in particular locally rectifiable, that is not exactly realisable, up to
reparameterisation, as a non-terminating cross-section path.
\end{theorem}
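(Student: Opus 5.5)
The plan is to build $P$ from a zigzag in one width ratio and then combine Lemma~\ref{lem:mixedwidth}(c) with Lemma~\ref{lem:zigzag}. Fix orthonormal transverse directions $e_1,e_2\in\Rnm$ (possible since $n-1\ge2$). Let $F$ be the width ratio $W_\mu/W_\nu$ with $\mu=\delta_{e_1}+\delta_{-e_1}$ and $\nu=\delta_{e_2}+\delta_{-e_2}$. Both measures have barycentre $0$, so $F$ is a continuous positive function on $\Ac_{n-1}$. Take the one-parameter family of boxes
\[
 Q(r)=[0,r]\times[0,1]\times[0,1]^{n-3},
\]
so that $F([Q(r)])=r$ for $r$ near $1$. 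Choose amplitudes $\ell_k=k^{-2}$, which satisfy $\sum_k\ell_k<\infty$ and $\sum_k\sqrt{\ell_k}=\sum_k k^{-1}=\infty$. Let $r_0=1$ and $r_k=r_{k-1}+(-1)^{k+1}\ell_k$. The values $r_k$ then stay in a compact subinterval of $(1/2,2)$ and converge.

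Next, define $P$ on $[0,1]$ by traversing the segments $r\in[r_{k-1},r_k]$ linearly on consecutive parameter intervals $[\sigma_{k-1},\sigma_k]$, with $\sigma_k-\sigma_{k-1}=\ell_k/\Lambda$ and $\Lambda=\sum_k\ell_k$, so that $\sigma_k\uparrow\sigma^*=1$. Set $P(\sigma)=[Q(r(\sigma))]$. At $\sigma^*$ put the limit value, and for $\sigma\ge1$ continue with any Lipschitz path, for instance the constant one.

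To see that $P$ is Lipschitz, note that $r\mapsto[Q(r)]$ is Lipschitz in $\dA$ for $r\in[1/2,2]$. Indeed $\dH(Q(r),Q(r'))\le|r-r'|$, and the renormalisation used in Corollary~\ref{cor:cont} is Lipschitz on bodies of diameter bounded below inside a fixed ball. The parameter map $\sigma\mapsto r$ has slope $\pm\Lambda$, so $P$ is Lipschitz, and it takes values in $\Ac_{n-1}$.

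Now suppose $P=P_{B,\rho}\circ\varphi$ exactly. The zigzag sits on $[0,1]$, and the corresponding interval $\varphi([0,1])=[t_0,t^*]$ is a compact subinterval of $(a,f)$: since the path is non-terminating, only $\sigma\to\infty$ approaches $f$. Lemma~\ref{lem:mixedwidth}(c) then gives a bounded-variation $\gamma$ on $[t_0,t^*]$ with $F(P(\tau(t)))=F(P(\tau(t_0)))+\int_{t_0}^t\gamma$, where $\tau=\varphi^{-1}$. The function $F\circ P$ satisfies $F(P(\sigma_k))-F(P(\sigma_{k-1}))=(-1)^{k+1}\ell_k$ and $\sum_k\sqrt{\ell_k}=\infty$, so Lemma~\ref{lem:zigzag} applies and rules out such a $\gamma$. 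This contradiction proves the theorem.

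The main obstacle is the bookkeeping that places the accumulation point $\sigma^*$ strictly inside the realised interval, so that (c) is applied on a compact subinterval of $(a,f)$ where concavity gives bounded variation, rather than at the endpoint $f$. This is exactly why the zigzag is placed on a finite parameter interval followed by a tail. A second point to check is that (c) really covers all of $[t_0,t^*]$: it needs $W_\nu$ bounded below there, which holds because the sections are full-dimensional on a compact subinterval of $(a,f)$.
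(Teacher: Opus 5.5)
Your proof is correct and matches the paper's argument: a box path $[Q_{r(\sigma)}]$ whose width ratio $r$ zigzags with increments $\ell_k\asymp k^{-2}$ on a finite parameter interval, followed by Lemma~\ref{lem:mixedwidth}(c) on the compact interval $\varphi([0,\sigma^*])\subset(a,f)$, which Lemma~\ref{lem:zigzag} contradicts. One trivial slip: with $r_0=1$ and $\ell_1=1$ you get $r_1=2$, so the values lie in $[1,2]$ rather than in a compact subinterval of $(1/2,2)$; this is harmless, since your Lipschitz estimate covers all of $[1/2,2]$.
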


\begin{proof}
Let $Q_r=[0,r]\times[0,1]^{n-2}\subseteq\Rnm$.  On $r\in[\tfrac54,\tfrac74]$ the map
$r\mapsto[Q_r]$ is bi-Lipschitz into $\Ac_{n-1}$.  Let $r\colon[0,\sigma^*]\to\mathbb R$ be
piecewise linear with slopes $\pm1$ alternating, starting at $r(0)=\tfrac32$, with
increments $\ell_k=1/(4k^2)$; then $\sigma^*=\sum_k\ell_k<\infty$, the alternating
increments keep $r\in[\tfrac54,\tfrac74]$, $r$ is $1$-Lipschitz and extends
continuously by the constant $r(\sigma^*)$ on $[\sigma^*,\infty)$, and
$\sum_k\sqrt{\ell_k}=\sum_k1/(2k)=\infty$.  Put $P(\sigma)=[Q_{r(\sigma)}]$.

Suppose $P=P_{B,\rho}\circ\varphi$ for an increasing reparameterisation $\varphi$.
Then $[t_0,t^*]:=\varphi([0,\sigma^*])$ is a compact subinterval of the open height
interval $(a,f)$, since $\varphi$ continues beyond $\sigma^*$.  Take
$\mu=\delta_{u_1'}+\delta_{-u_1'}$ and $\nu=\delta_{u_2'}+\delta_{-u_2'}$ for two
transverse coordinate directions; on box shapes the ratio is
$W_\mu([Q_r])/W_\nu([Q_r])=r$.  By Lemma~\ref{lem:mixedwidth}(c), the function
$r\circ\tau$, where $\tau=\varphi^{-1}$ restricted to $[t_0,t^*]$, is the integral of
a function of bounded variation.  This contradicts Lemma~\ref{lem:zigzag}.
\end{proof}

The exponent $\tfrac12$ is the threshold, not an artefact of the method: the same
zigzags with $\sum_k\sqrt{\ell_k}<\infty$ are realisable at small amplitude.

\begin{proposition}[Hierarchical zigzags of small amplitude are
realisable]\label{prop:hierzig}
In the notation of Theorem~\ref{thm:norealize}, there is $s_0>0$, depending only on
the range $[m,M]\subseteq(0,\infty)$ of $r$, such that if
$\sum_k\sqrt{\ell_k}\le s_0$ then $P$ is exactly realisable up to
reparameterisation.  Since replacing the increments $\ell_k$ by $\eps\,\ell_k$
scales $\sum_k\sqrt{\ell_k}$ by $\sqrt\eps$, every zigzag with
$\sum_k\sqrt{\ell_k}<\infty$ becomes realisable at sufficiently small amplitude,
while the increments of Theorem~\ref{thm:norealize} fail the condition at every
amplitude.
\end{proposition}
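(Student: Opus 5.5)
We build the realising body directly, with box-shaped sections.  The cleanest ansatz takes sections $B_t=\Phi(t)\,Q_{r(\tau(t))}$, suitably translated, where $\Phi$ is a common scale factor and $\tau$ a reparameterisation.  We take every section to be a box $[0,x(t)]\times[0,y(t)]^{n-2}$ placed at the origin corner, and put $x(t)=\Phi(t)\,r(\tau(t))$ and $y(t)=\Phi(t)$.  For any family of boxes, the union $\bigcup_t B_t\times\{t\}$ is convex exactly when $x$ and $y$ are concave and positive.  To see this, note that the support function of the box in a direction $u'$ is a nonnegative combination of $x$ and $y$ (the positive parts of the coordinates of $u'$); this matches Lemma~\ref{lem:convexbody}.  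The problem therefore reduces to a one-dimensional one: find, on a height interval, a positive concave $\Phi$ and a reparameterisation $\tau$ such that $\Phi\cdot(r\circ\tau)$ is also concave.  We must also arrange that $\Phi\to0$ at the top, so that the path does not terminate.  On $[\sigma^*,\infty)$, where $r$ is constant, everything is conical and the path ends at the apex.

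Reparameterise by the zigzag index.  On the $k$-th leg, $r\circ\tau$ is monotone with $|d r/d\tau|=1$.  We choose $\tau$ affine on each leg, with time-length $\Delta_k$, so that $r\circ\tau$ is piecewise linear with slope $\pm\ell_k/\Delta_k$.  Then $\Phi\,(r\circ\tau)$ is concave exactly when, at each corner, the downward kink of $\Phi$ dominates the upward kink contributed by $r$.  A convex corner of $r\circ\tau$ (slope going from $-\ell_k/\Delta_k$ to $+\ell_{k+1}/\Delta_{k+1}$) must be absorbed.  Taking $\Phi$ piecewise linear with corners at the same points, the condition at a convex corner is
\[
  \Phi\Big(\frac{\ell_k}{\Delta_k}+\frac{\ell_{k+1}}{\Delta_{k+1}}\Big)\;\le\; m\,\big|\text{drop in }\Phi'\text{ there}\big|,
\]
together with concavity of $\Phi$ within each leg.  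This is where Remark~\ref{rem:exactopen} enters.  The total available drop in $\Phi'$ is finite on a compact interval; a bounded concave $\Phi$ on a length-$T$ interval has $\Phi'$ varying by $O(\Phi/T)$-type amounts.  So we need $\sum_k \ell_k/\Delta_k\lesssim 1/T$ with $\sum\Delta_k=T$, up to constants depending on $m,M$.

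We then optimise $\Delta_k$ as in the Cauchy--Schwarz step of Lemma~\ref{lem:zigzag}, taking $\Delta_k=T\sqrt{\ell_k}/S$ with $S=\sum\sqrt{\ell_j}$.  This gives $\sum\ell_k/\Delta_k=S^2/T$, so the requirement becomes $S^2\le c(m,M)$, which is the condition $S\le s_0$.  Concretely we take $\Phi'$ to start at some value $A$ and decrease by $\Phi\,(M/m)(\ell_k/\Delta_k+\ell_{k+1}/\Delta_{k+1})$ at corners; within legs we keep $\Phi$ linear.  We must then check that $\Phi$ stays within a factor $2$ of its initial value over the whole interval $[t_0,t^*]$.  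We also need $x=\Phi\,(r\circ\tau)$ concave at the concave corners, which is automatic.  Smallness of $S^2$ guarantees both.

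Beyond $t^*$ we continue $\Phi$ linearly down to $0$, with slope at most the final $\Phi'$; this preserves concavity and makes the path non-terminating.  We then close off the bottom similarly.  The main obstacle is the bookkeeping in the previous step: verifying that the cumulative drop of $\Phi'$ does not drive $\Phi$ negative or too small before $t^*$, since $\Phi$ appears inside the absorption condition.  We handle this by a bootstrap assuming $\Phi\in[\Phi_0/2,\Phi_0]$ throughout, which holds once $S^2\le s_0^2$ is small in terms of $m$ and $M$.
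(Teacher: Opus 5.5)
Your reduction is correct and matches the paper's: with sections $\Phi(t)\,Q_{q(t)}$, $q=r\circ\tau$, everything comes down to finding a positive $\Phi$ with both $\Phi$ and $\Phi q$ concave (Lemma~\ref{lem:convexbody}). Your schedule $\Delta_k\propto\sqrt{\ell_k}$ is also the paper's, which takes $\Delta_k=\sqrt{\ell_k}$ and $T=S$. The gap is the claim that $\Phi q$ is concave ``exactly when, at each corner, the downward kink of $\Phi$ dominates the upward kink of $q$,'' with $\Phi$ kept linear on each leg. On a leg where both $\Phi$ and $q$ are affine, $(\Phi q)''=2\Phi'q'$, so $\Phi q$ is strictly convex on every leg where $\Phi'$ and $q'$ have the same sign.

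Such legs cannot be avoided. $\Phi'$ is nonincreasing and must drop strictly at every convex corner of $q$, while $q'$ alternates in sign. If $\Phi'>0$ on infinitely many legs, then it is positive throughout, and every increasing leg is bad. Otherwise $\Phi'<0$ eventually, and every later decreasing leg is bad. So no piecewise linear $\Phi$ works at all, and as written your body is not convex: its $e_1$-width $\Phi q$ fails to be concave. Your bootstrap keeping $\Phi\in[\Phi_0/2,\Phi_0]$ does not address this.

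The missing ingredient is an absolutely continuous part of the curvature of $\Phi$ that absorbs the cross term. Inside each leg you need $\Phi''q+2\Phi'q'\le0$, for instance $-\Phi''\ge2|\Phi'|\,|q'|/m$, in addition to the atoms at the convex corners. This is exactly what the paper's $\Phi=e^{-\Lambda}$ provides. Its $d\Lambda'$ has atoms $[q'(t_k)]_+/m$ at the junctions plus a density $D=(L^2M+2Ls^*)/m$. The two concavity conditions then read $d\Lambda'\ge(\Lambda')^2dt$ and $dq'\le2\Lambda'q'\,dt+(d\Lambda'-(\Lambda')^2dt)\,q$, and both hold as long as $|\Lambda'|\le L$.

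The repair fits your budget. The extra drop in $\Phi'$ is at most $(2/m)\sup|\Phi'|\sum_k|q_k'|\Delta_k=(2/m)\sup|\Phi'|\sum_k\ell_k\le(2/m)\sup|\Phi'|\,S^2$. It is therefore again controlled by $S^2$, and a condition $S\le s_0(m,M)$ still suffices. But this term has to be built into the construction; it is not automatic.
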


\begin{proof}
Write $S=\sum_k\sqrt{\ell_k}$ and let $\tau$ traverse the $k$-th linear piece of
$r$ in time $\Delta_k=\sqrt{\ell_k}$; put $q=r\circ\tau$ on $[0,T]$, $T=S$.  Then
$q$ is piecewise linear with $|q'|=\sqrt{\ell_k}$ on the $k$-th piece, so
$s^*:=\sup|q'|\le S$, and the upward jumps of $q'$ have total mass $J\le2S$; also
$m\le q\le M$.

We claim there is a positive $\Phi$ on $[0,T]$ with both $\Phi$ and $\Phi q$
concave.  Take $\Phi=e^{-\Lambda}$ with $\Lambda'$ nondecreasing,
$\Lambda'(0)=-L$, carrying an atom of mass $[q'(t_k)]_+/m$ at each junction $t_k$
and an absolutely continuous density $D:=(L^2M+2Ls^*)/m$.  Concavity of $\Phi$
amounts to $d\Lambda'\ge(\Lambda')^2\,dt$, and concavity of $\Phi q$ to
$dq'\le2\Lambda'q'\,dt+\big(d\Lambda'-(\Lambda')^2dt\big)\,q$; both hold provided
$|\Lambda'|\le L$ throughout, since the atoms dominate the jumps of $q'$ (as
$q\ge m$), the density dominates $L^2+2Ls^*/q$ pointwise (as $q\le M$ in the first
term and $q\ge m$ overall), and $D\ge L^2$.  The total increase of $\Lambda'$ is at
most $J/m+DT$, so $|\Lambda'|\le L$ holds provided $J/m+DT\le2L$.  Take
$L=J/m+1$; it then suffices that $DT\le2$.  If $S\le1$ then $L\le2/m+1=:L_1$ and
$s^*\le1$, whence $DT\le S\,(L_1^{\,2}M+2L_1)/m$, which is at most $2$ once
$S\le s_0:=\min\bigl(1,\ 2m/(L_1^{\,2}M+2L_1)\bigr)$.

Given $\Phi$, the sets $K_t=\Phi(t)\,Q_{q(t)}$ have section supports
$h(t,u')=\Phi(t)\big(q(t)\,a(u')+b(u')\big)$ with $a,b\ge0$, nonnegative combinations
of the concave $\Phi q$ and $\Phi$; by Lemma~\ref{lem:convexbody} their union over
$[0,T]$ is convex.  Close the top with the cone on the constant shape,
$K_t=\frac{f-t}{f-T}\,\Phi(T)\,Q_{q(T)}$ for $t\in[T,f)$, taking $f-T$ small enough
that the incoming slopes of every $h(\cdot,u')$ weakly decrease at $T$---both sides
scale with $q(t)a+b$, so a single inequality,
$\Phi'(T^-)-\Phi(T)s^*/m\ge-\Phi(T)/(f-T)$, suffices---and cap the bottom with any
compatible convex extension.  The resulting body realises $P$ exactly up to
reparameterisation, the constant tail of $P$ matching the cone.
\end{proof}

\begin{remark}\label{rem:halfpower}
Theorem~\ref{thm:norealize} and Proposition~\ref{prop:hierzig} together locate the
failure precisely: a rectifiable path may turn infinitely often, but exact
realisation requires the turning to be distributed hierarchically across scales
($\ell_k$ summable in square root and, in our proof, of small amplitude, absorbed
by slowing on the schedule $\Delta_k=\sqrt{\ell_k}$), and forbids turning spread
evenly across scales.
This is the dichotomy of Remark~\ref{rem:shadowrule} in another guise, and it refines
the necessary condition of Theorem~\ref{thm:necessity}: beyond local rectifiability,
every mixed-width ratio must admit, after a single common reparameterisation, a
derivative of locally bounded variation.  We return to the characterisation problem
in Question~\ref{q:exactchar}.
\end{remark}

\section{Nearly universal points and their distribution}\label{sec:universal}

We construct the non-boring points described in the introduction.  The starting point
is a criterion for when a prescribed family of sections assembles into a convex body.

\begin{lemma}[Convexity from concave section supports]\label{lem:convexbody}
Let $I\subseteq\mathbb R$ be an interval and, for $t\in I$, let $K_t\subseteq\Rnm$ be
nonempty, compact and convex with support function $h_{K_t}$.  The set
$B=\{(x',t):t\in I,\ x'\in K_t\}\subseteq\Rn$ is convex if and only if, for every
transverse unit vector $u'$, the function $t\mapsto h_{K_t}(u')$ is concave on $I$.
\end{lemma}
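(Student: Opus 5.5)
The plan is to prove the two directions separately, working throughout with the identity that a compact convex set is the intersection of its supporting half-spaces: $x'\in K_t$ if and only if $\langle x',u'\rangle\le h_{K_t}(u')$ for every transverse unit vector $u'$.

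\emph{Necessity} (convex $B$ gives concave supports). Fix a transverse unit vector $u'$ and heights $s,t\in I$, and let $\lambda\in[0,1]$. Compactness lets us pick $x'\in K_s$ and $y'\in K_t$ attaining $h_{K_s}(u')$ and $h_{K_t}(u')$. The points $(x',s)$ and $(y',t)$ lie in $B$, so by convexity
\[
\bigl((1-\lambda)x'+\lambda y',\,(1-\lambda)s+\lambda t\bigr)\in B.
\]
Hence $(1-\lambda)x'+\lambda y'\in K_{(1-\lambda)s+\lambda t}$. Pairing with $u'$ gives
\[
h_{K_{(1-\lambda)s+\lambda t}}(u')\ge(1-\lambda)h_{K_s}(u')+\lambda h_{K_t}(u'),
\]
which is concavity. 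Equivalently, this step is the inclusion $K_{(1-\lambda)s+\lambda t}\supseteq(1-\lambda)K_s+\lambda K_t$, read through support functions.

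\emph{Sufficiency} (concave supports give convex $B$). Take $(x',s),(y',t)\in B$ and put $r=(1-\lambda)s+\lambda t$, which lies in $I$ because $I$ is an interval. We must show $z'=(1-\lambda)x'+\lambda y'$ belongs to $K_r$. For each $u'$,
\[
\langle z',u'\rangle\le(1-\lambda)h_{K_s}(u')+\lambda h_{K_t}(u')\le h_{K_r}(u').
\]
The first inequality holds because $x'\in K_s$ and $y'\in K_t$; the second is the assumed concavity. Since this holds for every $u'$, the half-space characterization puts $z'$ in $K_r$, so $B$ is convex.

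There is no real obstacle here. The only points needing care are where compactness and nonemptiness enter. Compactness makes the supremum defining $h_{K_t}$ attained in the necessity step; without it one would take near-maximizers and pass to a limit. Nonemptiness and closedness make the half-space intersection equal to $K_r$ in the sufficiency step.

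It is also worth noting that the lemma asks for no continuity in $t$. So it applies verbatim to the slicing used in Lemma~\ref{lem:lipschitz} and Proposition~\ref{prop:degenerate}, where $I=(a,f)$ and $K_t=B_t$ are the sections of a given body.
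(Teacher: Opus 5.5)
Your proof is correct and follows essentially the paper's route. For sufficiency, both arguments rest on the description of each $K_t$ as the intersection of its supporting half-spaces; the paper phrases this as writing $B$ as an intersection over $u'$ of the convex regions lying under the graphs of $H(\cdot,u')$. For necessity, your two-point argument with attained maximizers spells out what the paper only gestures at with ``restricting to the supporting slab in direction $u'$''.
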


\begin{proof}
Write $H(t,u')=h_{K_t}(u')$.  For each fixed $u'$,
$B\cap\{\langle x',u'\rangle\le H(t,u')\}$ is the region under the graph of
$t\mapsto H(t,u')$ in the $(x',t)$ slab, which is convex if and only if
$-H(\cdot,u')$ is convex, i.e.\ $H(\cdot,u')$ concave.  Now
$B=\bigcap_{u'}\{(x',t):t\in I,\ \langle x',u'\rangle\le H(t,u')\}$ is an intersection
of such regions with the slab $\{t\in I\}$; an intersection of convex sets is convex,
and conversely convexity of $B$ forces each $H(\cdot,u')$ concave by restricting to
the supporting slab in direction $u'$.
\end{proof}

\subsection*{The section-support envelope and non-disturbing sets}
The single-point construction proceeds by stacking shrinking sections and taking a
convex hull.  The one quantitative input is a description of which placements leave the
lower sections untouched.  Throughout, for a compact convex $K\subseteq\Rnm$ and
$u'\in\Sp$ we write $h_K(u')=\max_{x'\in K}\langle x',u'\rangle$ for its support
function, and we identify each horizontal hyperplane $H_t$ with $\Rnm$.

Fix heights $t_0<t_1<\dots<t_{k-1}$ and sections $b_j\subseteq H_{t_j}$
$(0\le j\le k-1)$, each compact, convex and full-dimensional, with $0\in\intr b_j$.
Put
\[
  B'=\conv\!\Big(\textstyle\bigcup_{j\le k-1}b_j\Big),\qquad g_j(u')=h_{b_j}(u'),
\]
and assume the \emph{invariant} $B'\cap H_{t_j}=b_j$ holds for every $j\le k-1$.  For a
transverse direction $u'\in\Sp$ define the \emph{section-support envelope}
\[
  \phi_{u'}(t)\;=\;h_{B'\cap H_t}(u')\qquad (t_0\le t\le t_{k-1}).
\]

\begin{lemma}[Envelope]\label{lem:envelope}
For each $u'$, $\phi_{u'}$ is the least concave majorant of the finite point set
$\{(t_j,g_j(u')):0\le j\le k-1\}$; in particular it is concave and piecewise linear
with breakpoints among $t_0,\dots,t_{k-1}$, and $\phi_{u'}(t_j)=g_j(u')$.
\end{lemma}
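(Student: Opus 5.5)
We prove the two inequalities $\phi_{u'}\le\psi_{u'}$ and $\phi_{u'}\ge\psi_{u'}$, where $\psi_{u'}$ denotes the least concave majorant of the points $(t_j,g_j(u'))$ on $[t_0,t_{k-1}]$. The listed properties then follow from the standard description of $\psi_{u'}$: it is the upper boundary of the convex hull of finitely many planar points, so it is concave and piecewise linear with breakpoints among the $t_j$.

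\emph{Upper bound.} The set $B'$ is a compact convex body whose section over $[t_0,t_{k-1}]$ is $B'\cap H_t$. Applying Lemma~\ref{lem:convexbody} to $B'$, the function $\phi_{u'}$ is concave on $[t_0,t_{k-1}]$. By the invariant $B'\cap H_{t_j}=b_j$ we have $\phi_{u'}(t_j)=g_j(u')$. So $\phi_{u'}$ is a concave majorant of the point set, and minimality of $\psi_{u'}$ gives $\psi_{u'}\le\phi_{u'}$.

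\emph{Lower bound.} Take any $x=(x',t)\in B'\cap H_t$. By Carathéodory (or simply by the definition of the convex hull of a finite union of convex sets), we can write $x=\sum_j\lambda_j(y_j',t_j)$ with $\lambda_j\ge0$, $\sum\lambda_j=1$, $y_j'\in b_j$, and $t=\sum\lambda_jt_j$. Then
\[
\langle x',u'\rangle=\sum_j\lambda_j\langle y_j',u'\rangle\le\sum_j\lambda_j g_j(u')\le\psi_{u'}\Bigl(\sum_j\lambda_jt_j\Bigr)=\psi_{u'}(t).
\]
The last inequality holds because $\psi_{u'}$ is concave and dominates each $g_j(u')$ at $t_j$, via Jensen. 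Taking the supremum over $x$ gives $\phi_{u'}\le\psi_{u'}$.

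The only point needing care is the hull representation. Because each $b_j$ is convex, points of $\conv\bigcup b_j$ are convex combinations using at most one point from each $b_j$: several points from the same $b_j$ can be merged by convexity of $b_j$. This is routine. The main obstacle is therefore modest: the identity $\phi_{u'}(t_j)=g_j(u')$ depends on the assumed invariant. Without it, the hull could enlarge $b_j$, and then only $\phi_{u'}=\psi_{u'}$ computed from the true sections would hold. The lower-bound argument shows, in fact, that the invariant is automatic whenever $g_j(u')=\psi_{u'}(t_j)$ for all $u'$.
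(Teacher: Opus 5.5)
Your argument is correct and is essentially the paper's. Both proofs write a point of $B'\cap H_t$ as $\sum_j\lambda_j(y'_j,t_j)$ with one point from each $b_j$ and bound its $u'$-coordinate by $\sum_j\lambda_jg_j(u')$. The only difference is the reverse inequality: the paper attains the bound with points exposing each $b_j$ in direction $u'$, identifying $\phi_{u'}(t)$ as the maximum over the simplex (the upper hull), while you use concavity of $\phi_{u'}$ via Lemma~\ref{lem:convexbody}, which is equally valid.

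Your closing remark needs a correction. The majorant step only needs $\phi_{u'}(t_j)\ge g_j(u')$, and this holds automatically because $b_j\subseteq B'\cap H_{t_j}$. So $\phi_{u'}$ equals the least concave majorant of the given data $(t_j,g_j(u'))$ with or without the invariant. The invariant is needed only for the final clause $\phi_{u'}(t_j)=g_j(u')$, exactly as in the paper.
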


\begin{proof}
A point of $B'$ at height $t$ is $\sum_j\lambda_j(x'_j,t_j)$ with $x'_j\in b_j$,
$\lambda_j\ge0$, $\sum\lambda_j=1$, $\sum\lambda_jt_j=t$.  Its $u'$-coordinate is
$\sum_j\lambda_j\langle x'_j,u'\rangle\le\sum_j\lambda_jg_j(u')$, with equality attained
by choosing each $x'_j$ to expose $b_j$ in direction $u'$.  Hence $\phi_{u'}(t)$ is the
maximum of $\sum_j\lambda_jg_j(u')$ over $\lambda$ in the simplex with
$\sum_j\lambda_jt_j=t$, which is exactly the least concave majorant of the points
$(t_j,g_j(u'))$ evaluated at $t$.  The value at $t_j$ equals $g_j(u')$ because the
invariant gives $B'\cap H_{t_j}=b_j$.  A least concave majorant of finitely many points
is piecewise linear with vertices among them.
\end{proof}

\begin{remark}[Envelope, continuum form]\label{rem:envcont}
The identity persists for infinitely many sections.  Let $\{b_i\}_{i\in I}$ be any
family of nonempty compacta at heights $\{t_i\}$ with bounded union, whose convex
hull $U$ is full-dimensional, and let $B'=\overline U$.  At every height $t$
interior to the interval spanned by the heights, $h_{B'\cap H_t}(u')$ is the least
concave majorant of the data $\{(t_i,h_{b_i}(u'))\}$ evaluated at $t$.  Indeed $U$
is the increasing union of the hulls of finite subfamilies, so
$h_{U\cap H_t}(u')$ is the supremum of the corresponding finite majorants, a
directed supremum of concave majorants of nested data sets, hence the least concave
majorant of all the data; and closure adds nothing at interior heights, since for
convex $U$ with nonempty interior, $\overline U\cap H_t=\overline{U\cap H_t}$
whenever $H_t$ meets $\intr U$.
\end{remark}

Now let $t_k>t_{k-1}$, write $\tau_k=t_k-t_{k-1}$, and for $y'\in H_{t_k}$ call the
point $(y',t_k)$ \emph{non-disturbing} if
$\conv\!\big(B'\cup\{(y',t_k)\}\big)\cap H_{t_j}=b_j$ for all $j\le k-1$.  Let $N_k$ be
the set of $y'$ that are non-disturbing.

\begin{lemma}[Non-disturbing sets]\label{lem:nondisturb}
Let $m_{u'}=(\phi_{u'})'_-(t_{k-1})$ be the left derivative of $\phi_{u'}$ at $t_{k-1}$
$($the slope of its final linear piece$)$.  Then
\[
  N_k=\bigl\{\,y'\in H_{t_k}:\ \langle y',u'\rangle\le g_{k-1}(u')+m_{u'}\tau_k
  \ \text{ for all }u'\in\Sp\,\bigr\}.
\]
Consequently:
\begin{enumerate}
\item[(a)] $N_k$ is closed and convex.
\item[(b)] $m_{u'}\ge-D/\tau_{k-1}$, where $D=\max_{j\le k-1}\diam b_j$ and
$\tau_{k-1}=t_{k-1}-t_{k-2}$.
\item[(c)] If $g_{k-1}(u')+m_{u'}\tau_k\ge r$ for every $u'$, then
$\overline B(0,r)\subseteq N_k$.  In particular, if
$\;\alpha\,d-\dfrac{D}{\tau_{k-1}}\,\tau_k\ge r$, where
$\alpha\,d=\min_{u'}g_{k-1}(u')$ (equal to $\inr(b_{k-1})$ when $b_{k-1}$ has
incentre at the origin), then $\overline B(0,r)\subseteq N_k$.
\end{enumerate}
\end{lemma}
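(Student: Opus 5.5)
The plan is to reduce the non-disturbing condition to a statement about concave majorants, using Lemma~\ref{lem:envelope} for the old body and for the enlarged one.

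\textbf{Reduction to majorants.} Put $B''=\conv\bigl(B'\cup\{(y',t_k)\}\bigr)$. This is the hull of the sections $b_0,\dots,b_{k-1}$ together with the point section $\{y'\}$ at height $t_k$. The convex-combination argument in the proof of Lemma~\ref{lem:envelope} does not use the invariant for the new point. So for $t_0\le t\le t_k$, the value $h_{B''\cap H_t}(u')$ is the least concave majorant $\psi_{u'}$ of the data $\{(t_j,g_j(u'))\}_{j\le k-1}\cup\{(t_k,\langle y',u'\rangle)\}$. Since $B''\cap H_{t_j}\supseteq b_j$ automatically, and both sets are compact convex, equality holds exactly when $\psi_{u'}(t_j)=g_j(u')$ for every $u'$ and every $j\le k-1$.

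\textbf{The slope criterion.} Fix $u'$. On $[t_0,t_{k-1}]$ the function $\phi_{u'}$ is the least concave majorant of the old data. Extend it past $t_{k-1}$ by continuing its final linear piece with slope $m_{u'}$. Call this extension $\tilde\phi$; it is concave on $[t_0,t_k]$.

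If $\langle y',u'\rangle\le g_{k-1}(u')+m_{u'}\tau_k=\tilde\phi(t_k)$, then $\tilde\phi$ majorises all the new data. Hence $\psi_{u'}\le\tilde\phi$, and since $\psi_{u'}\ge\phi_{u'}$ we get $\psi_{u'}=\phi_{u'}$ at every $t_j$.

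Conversely, suppose $\langle y',u'\rangle>\tilde\phi(t_k)$. Let $t_i<t_{k-1}$ be the left end of the final linear piece; if $k-1=0$ the statement is degenerate and needs a convention. The chord from $(t_i,g_i(u'))$ to $(t_k,\langle y',u'\rangle)$ has slope greater than $m_{u'}$. At $t_{k-1}$ this chord lies strictly above $g_{k-1}(u')$, so $\psi_{u'}(t_{k-1})>g_{k-1}(u')$ and $b_{k-1}$ is disturbed. Taking all $u'$ together gives the displayed description of $N_k$.

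\textbf{The consequences.} Each is short.
\begin{itemize}
\item[(a)] $N_k$ is an intersection of closed half-spaces, so it is closed and convex.
\item[(b)] Concavity makes the final slope at least the slope of the chord from $t_{k-2}$ to $t_{k-1}$, namely $(g_{k-1}(u')-g_{k-2}(u'))/\tau_{k-1}$. Both $b_{k-1}$ and $b_{k-2}$ contain $0$, so $g_{k-1}(u')\ge0$ and $g_{k-2}(u')\le\diam b_{k-2}\le D$. Hence $m_{u'}\ge-D/\tau_{k-1}$.
\item[(c)] A point $y'$ with $|y'|\le r$ satisfies $\langle y',u'\rangle\le r\le g_{k-1}(u')+m_{u'}\tau_k$ for every $u'$, so $y'\in N_k$. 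The "in particular" clause follows by combining this with (b) and $g_{k-1}(u')\ge\alpha d$.
\end{itemize}

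\textbf{Main obstacle.} The step needing care is the converse direction of the slope criterion. One must check that a strict violation in a single direction $u'$ really enlarges the section $b_{k-1}$ and not merely some other intermediate section; the chord argument above settles this at $t_{k-1}$ itself. The case where the final linear piece is degenerate (for example $k=1$) must also be handled. Everything else is bookkeeping with concave majorants.
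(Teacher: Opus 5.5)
Your argument is essentially the paper's: you reduce non-disturbance to the least concave majorant being unchanged at the designed heights, and you prove sufficiency by extending $\phi_{u'}$ with its terminal slope. You prove necessity with the chord from the left end of the final linear piece, evaluated at $t_{k-1}$, and (a) and (c) go exactly as in the paper. You are also right that the lemma tacitly needs $k\ge2$; for $k=1$ every placement is non-disturbing.

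One step in (b) is justified the wrong way round.
\begin{itemize}
\item Concavity of $\phi_{u'}$ gives $(\phi_{u'})'_-(t_{k-1})\le\bigl(\phi_{u'}(t_{k-1})-\phi_{u'}(t_{k-2})\bigr)/\tau_{k-1}$, not $\ge$: the left derivative at the right end is bounded \emph{above} by earlier chord slopes.
\item The inequality you want is still true, in fact with equality, but for a different reason. By Lemma~\ref{lem:envelope} the breakpoints of $\phi_{u'}$ lie among the $t_j$, so $\phi_{u'}$ is affine on $[t_{k-2},t_{k-1}]$ with slope $m_{u'}$. The invariant gives $\phi_{u'}(t_{k-2})=g_{k-2}(u')$. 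Hence $m_{u'}=\bigl(g_{k-1}(u')-g_{k-2}(u')\bigr)/\tau_{k-1}$.
\item The invariant at $t_{k-2}$ is genuinely needed. Without it the majorant there can exceed $g_{k-2}(u')$, and the final slope can fall below the data chord slope.
\item The paper avoids this by taking the slope from the last breakpoint $t_\star\le t_{k-2}$. It uses $\phi_{u'}(t_\star)\le\max_j g_j(u')\le D$ and $t_{k-1}-t_\star\ge\tau_{k-1}$.
\end{itemize}
With either one-line repair your proof is complete.
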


\begin{proof}
Adding the point $p=(y',t_k)$ above the top height $t_{k-1}$ replaces each envelope
$\phi_{u'}$ by the least concave majorant $\tilde\phi_{u'}$ of the points
$(t_j,g_j(u'))_{j\le k-1}$ together with $(t_k,\langle y',u'\rangle)$.  By
Lemma~\ref{lem:envelope} applied to $\conv(B'\cup\{p\})$, the new section at $t_j$ is
$b_j$ for all $j\le k-1$ iff $\tilde\phi_{u'}(t_j)=\phi_{u'}(t_j)$ for all $j$ and all
$u'$, i.e.\ iff adjoining the point $(t_k,\langle y',u'\rangle)$ does not raise the
majorant on $[t_0,t_{k-1}]$.  Because $t_k$ lies to the right of that interval, the
majorant is unchanged there precisely when the new point lies on or below the linear
extension of $\phi_{u'}$ by its terminal slope, that is
$\langle y',u'\rangle\le\phi_{u'}(t_{k-1})+m_{u'}\tau_k=g_{k-1}(u')+m_{u'}\tau_k$:
on or below, because extending $\phi_{u'}$ linearly by its terminal slope is a
concave majorant of all the data that agrees with $\phi_{u'}$ on $[t_0,t_{k-1}]$;
and not above, because if $\langle y',u'\rangle$ exceeds this value, the chord from
$(t_k,\langle y',u'\rangle)$ to the last breakpoint $(t_\star,\phi_{u'}(t_\star))$
of $\phi_{u'}$ exceeds $\phi_{u'}(t_{k-1})$ at the designed height $t_{k-1}$ by a
positive multiple of the excess.  This
proves the displayed description; a placement of the whole shape is non-disturbing iff
each of its points is, i.e.\ iff the shape lies in $N_k$.

(a) $N_k$ is an intersection of closed half-spaces.
(b) By Lemma~\ref{lem:envelope}, $\phi_{u'}$ is concave and piecewise linear with
breakpoints among the $t_j$, so its final piece runs from some breakpoint
$t_\star\le t_{k-2}$ to $t_{k-1}$ and
$m_{u'}=\dfrac{g_{k-1}(u')-\phi_{u'}(t_\star)}{t_{k-1}-t_\star}$.  Since
$g_{k-1}(u')\ge0$ (as $0\in b_{k-1}$), $\phi_{u'}(t_\star)\le\max_jg_j(u')\le D$, and
$t_{k-1}-t_\star\ge t_{k-1}-t_{k-2}=\tau_{k-1}$, we get
$m_{u'}\ge-D/(t_{k-1}-t_\star)\ge-D/\tau_{k-1}$.
(c) If $c(u'):=g_{k-1}(u')+m_{u'}\tau_k\ge r$ for all $u'$, then every defining
half-space $\{\langle y',u'\rangle\le c(u')\}$ of $N_k$ contains
$\{\langle y',u'\rangle\le r\}$, so their intersection contains
$\bigcap_{u'}\{\langle\cdot,u'\rangle\le r\}=\overline B(0,r)$.  The stated sufficient
inequality follows from (b) and the definition of $\alpha d$.
\end{proof}

\subsection*{The apex is a single point}
The second ingredient controls the top of the body.

\begin{lemma}[Concave majorant at the endpoint]\label{lem:majorant}
Let $t_j\uparrow t_\infty$ and $d_j>0$ with $d_j\to0$.  Then the least concave majorant
$\Psi$ of the points $\{(t_j,d_j)\}$ satisfies $\displaystyle\lim_{t\to t_\infty^-}\Psi(t)=0$.
\end{lemma}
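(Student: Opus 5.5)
We squeeze $\Psi$ between $0$ and explicit affine majorants that get arbitrarily small near $t_\infty$. First, $\Psi$ is finite. Since $d_j\to0$, the $d_j$ are bounded, say by $M$, so the constant $M$ is a concave majorant and $0\le\Psi\le M$. Here $\Psi$ is defined on $[t_1,t_\infty)$, or on its closure if one prefers. Second, $\Psi\ge0$: each point $t<t_\infty$ lies between some $t_j$ and $t_{j+1}$, so $\Psi(t)$ is at least the chord value between $d_j>0$ and $d_{j+1}>0$. Hence only an upper bound near $t_\infty$ is needed.

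Fix $\eps>0$ and choose $J$ with $d_j\le\eps$ for all $j\ge J$. We build an affine function $\ell$ with the following three properties:
\begin{itemize}
\item $\ell(t_j)\ge d_j$ for every $j$;
\item $\ell(t)\le 2\eps$ for $t$ close to $t_\infty$;
\item $\ell\ge\eps$ on $[t_J,t_\infty]$.
\end{itemize}
Take $\ell(t)=\eps+ \lambda\,(t_\infty-t)$ with slope parameter $\lambda\ge0$. For $j\ge J$ we have $\ell(t_j)\ge\eps\ge d_j$, whatever $\lambda$ is. For the finitely many $j<J$, we have $t_\infty-t_j\ge t_\infty-t_{J-1}>0$, so choosing
\[
\lambda=\frac{M}{t_\infty-t_{J-1}}
\]
gives $\ell(t_j)\ge M\ge d_j$. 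Thus $\ell$ is an affine, hence concave, majorant of the data, and $\Psi\le\ell$ on the domain. Therefore
\[
\limsup_{t\to t_\infty^-}\Psi(t)\le\lim_{t\to t_\infty^-}\ell(t)=\eps.
\]
Since $\eps$ was arbitrary and $\Psi\ge0$, we conclude $\lim_{t\to t_\infty^-}\Psi(t)=0$.

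The only point needing care is the choice of majorant. Working with the piecewise-linear structure of $\Psi$ itself is harder, because that structure involves infinitely many breakpoints accumulating at $t_\infty$. Bounding $\Psi$ by a single affine majorant avoids this: the $j\ge J$ tail is controlled by the floor $\eps$, and the finite head is controlled by the slope, which is finite because $t_{J-1}<t_\infty$ strictly. Beyond this, I expect no real obstacle.
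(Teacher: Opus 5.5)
Your proof is correct and follows the paper's argument. Both bound $\Psi$ by a single affine majorant that equals $\eps$ at $t_\infty$, with slope chosen to dominate the finitely many head points, and combine this with $\Psi\ge0$; the paper takes the slope $\max_{j<K} d_j/(t_\infty-t_j)$ where you take $M/(t_\infty-t_{J-1})$, which is an immaterial difference.
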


\begin{proof}
Fix $\eps>0$ and choose $K$ with $d_j<\eps$ for $j\ge K$.  Consider the affine function
$L(t)=\eps+b\,(t-t_\infty)$ with slope
$b=-\max_{j<K}\dfrac{d_j}{t_\infty-t_j}\le0$ (a finite maximum).  For $j\ge K$,
$L(t_j)=\eps+b(t_j-t_\infty)\ge\eps>d_j$ since $b\le0$ and $t_j<t_\infty$; for $j<K$,
$L(t_j)=\eps+|b|(t_\infty-t_j)\ge|b|(t_\infty-t_j)\ge d_j$.  Thus $L$ majorises every
point, so $\Psi\le L$, giving $\Psi(t_\infty^-)\le L(t_\infty)=\eps$.  As
$\Psi\ge0$ and $\eps$ was arbitrary, $\Psi(t_\infty^-)=0$.
\end{proof}

\subsection*{A single nearly universal point}
\begin{theorem}\label{thm:singlenu}
For every $n\ge3$ there is a body $B\subseteq\Rn$ and a direction $\rho$ whose last
support set is a single point $p$ with $T(B,\rho)=\A_{n-1}$.  Thus $p$ is nearly
universal.
\end{theorem}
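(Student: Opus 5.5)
The plan is to stack shrinking, prescribed full-dimensional sections at heights $t_0<t_1<\cdots\uparrow t_\infty$ and take the closed convex hull. The prescribed shapes will run through a countable dense subset of $\A_{n-1}$, each appearing infinitely often.

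\textbf{Choosing the shapes.} By Propositions~\ref{prop:compact} and~\ref{prop:open-connected}, $\A_{n-1}$ is a compact metric space in which $\Ac_{n-1}$ is dense. Fix a sequence $(\sigma_k)$ in $\Ac_{n-1}$ that visits every point of a countable dense subset infinitely often. For each $k$, choose a representative $c_k$ of $\sigma_k$ with incentre at the origin and unit diameter. Its inradius $\alpha_k>0$ is then fixed by the shape. Put $b_k=d_k c_k\subseteq H_{t_k}$, with $d_k>0$ still to be chosen.

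\textbf{The recursion.} Sizes and gaps are chosen by induction, so that the invariant $\conv(\bigcup_{j\le k}b_j)\cap H_{t_j}=b_j$ holds for all $j\le k$. Given the data up to $k-1$, with $D=\max_{j<k}d_j$, we pick $d_k$ and $\tau_k=t_k-t_{k-1}$. By Lemma~\ref{lem:nondisturb}(c), the placement of $b_k$ is non-disturbing as soon as
\[
b_k\subseteq\overline B(0,r),\qquad r=d_k\le \alpha_{k-1}d_{k-1}-\frac{D}{\tau_{k-1}}\tau_k .
\]
This holds once $d_k\le\alpha_{k-1}d_{k-1}/2$ and $\tau_k\le \alpha_{k-1}d_{k-1}\tau_{k-1}/(2D)$. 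Since the new points are added above $t_{k-1}$, the invariant is preserved, because non-disturbance is checked pointwise. We also impose $\tau_k\le2^{-k}$ and $d_k\le 2^{-k}$, so that $t_k\uparrow t_\infty<\infty$ and $d_k\to0$.

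\textbf{Identifying the body.} Let $B$ be the closure of the hull of all the $b_j$. Remark~\ref{rem:envcont} identifies each interior section's support as the least concave majorant of all the data. Because every finite stage already has the correct majorant at every $t_j$, and each later point is non-disturbing for every earlier stage, the invariant passes to the limit: $B\cap H_{t_j}=b_j$ for all $j$. For the apex, $h_{b_j}(u')\le d_j\to0$, and Lemma~\ref{lem:majorant} gives $h_{B\cap H_t}(u')\to0$ uniformly in $u'$ as $t\uparrow t_\infty$. Hence the last support set $B\cap H_{t_\infty}$ is the single point $0$, and the path does not terminate. At the bottom we take $u$ pointing up; the base is the compact section $b_0$, possibly coned off.

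\textbf{The tail.} By construction $[B_{t_k}]=\sigma_k$ and $t_k\uparrow f=t_\infty$. Every point of the dense set therefore recurs along a sequence of heights tending to $f$, so it lies in $T(B,\rho)$. The tail is closed by Theorem~\ref{thm:continuum}, so $T(B,\rho)=\A_{n-1}$. Thus $p$ is nearly universal: every a-shape is approximated arbitrarily well arbitrarily close to $p$.

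I expect the main obstacle to be passing the invariant through the infinite limit. In particular, one must check that the closure does not enlarge the sections $b_j$, and that the bound of Lemma~\ref{lem:nondisturb}(b) on the terminal slopes remains valid when later stages are added. The first point I would handle by the pointwise non-disturbance argument above. The second I would handle by noting that the slopes only change above $t_{k-1}$, and I would verify this carefully before relying on it.
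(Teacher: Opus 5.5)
Your proposal is correct and follows the paper's proof essentially step for step: the same stacking recursion certified by Lemma~\ref{lem:nondisturb}(c) (you shrink by $\tfrac12$ where the paper uses $\tfrac14$), the same apex argument through the envelope identity and Lemma~\ref{lem:majorant}, and the same ``closed set containing a dense set'' argument for the tail. The loose ends are cosmetic: $\tau_1$ must be chosen freely, since your gap formula refers to an undefined $\tau_0$ and for $k=1$ every placement is non-disturbing; the closure concern you flag is already settled by Remark~\ref{rem:envcont} at the interior heights $t_j$, $j\ge1$, where the paper argues instead via $\intr\overline U=\intr U$; and the worry about Lemma~\ref{lem:nondisturb}(b) is moot, because that bound is only ever applied to the finite stage built so far.
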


\begin{proof}
Work in $\Rn=\Rnm\times\mathbb R$ with $\rho=e_n$.  Let $\{\sigma_m\}_{m\ge1}$ be a
countable dense subset of $\Ac_{n-1}$ (dense in $\A_{n-1}$ by
Proposition~\ref{prop:open-connected}), and let $(s_k)_{k\ge1}$ be a sequence in which
each $\sigma_m$ occurs infinitely often.  For a full-dimensional a-shape $s$ write
$\alpha(s)=\inr/\!\diam\in(0,\tfrac12]$ for the inradius-to-diameter ratio of any
representative; set $\alpha_k=\alpha(s_k)$.

\emph{Base.}  Put $t_0=-1$, $d_0=1$, and let $b_0=\overline B(0,\tfrac12)\subseteq H_{t_0}$,
a ball of diameter $d_0$ centred on the axis; so $0\in\intr b_0$ and
$\alpha(b_0)=\tfrac12$.  Set $B^{(0)}=b_0$.

\emph{Recursion.}  Suppose $B^{(k-1)}=\conv(b_0\cup\dots\cup b_{k-1})$ has been built
with sections $b_i\subseteq H_{t_i}$ satisfying $0\in\intr b_i$,
$\overline B(0,\alpha_i'd_i)\subseteq b_i\subseteq\overline B(0,d_i)$ where
$d_i=\diam b_i$ and $\alpha_i'=\alpha(b_i)$, and the invariant
$B^{(k-1)}\cap H_{t_i}=b_i$ $(i\le k-1)$.  Let $D_{k-1}=\max_{i\le k-1}d_i$; since the
$d_i$ will be decreasing, $D_{k-1}=d_0=1$.  Choose
\[
  \tau_k\ \le\ \frac{\alpha_{k-1}'\,d_{k-1}\,\tau_{k-1}}{4\,D_{k-1}},\qquad
  t_k=t_{k-1}+\tau_k,\qquad
  d_k=\tfrac14\,\alpha_{k-1}'\,d_{k-1},
\]
(with $\tau_1$ chosen freely in $(0,\tfrac12)$).  By
Lemma~\ref{lem:nondisturb}(c), taking $r=d_k$, $\alpha d=\inr(b_{k-1})=\alpha_{k-1}'d_{k-1}$
and $D=D_{k-1}$,
\[
  \alpha_{k-1}'d_{k-1}-\frac{D_{k-1}}{\tau_{k-1}}\,\tau_k
  \ \ge\ \alpha_{k-1}'d_{k-1}-\tfrac14\alpha_{k-1}'d_{k-1}
  \ =\ \tfrac34\alpha_{k-1}'d_{k-1}\ \ge\ d_k,
\]
so $\overline B(0,d_k)\subseteq N_k$.  Let $b_k\subseteq H_{t_k}$ be a representative of
$s_k$ scaled to diameter $d_k$ and translated so that its incentre is the origin; then
$0\in\intr b_k$, $\overline B(0,\alpha_k'd_k)\subseteq b_k\subseteq\overline B(0,d_k)
\subseteq N_k$.  Since $b_k\subseteq N_k$, the placement is non-disturbing
(Lemma~\ref{lem:nondisturb}), and setting $B^{(k)}=\conv(B^{(k-1)}\cup b_k)$ preserves
the invariant, with $B^{(k)}\cap H_{t_k}=b_k$ because $t_k$ is the top height.  As
$\alpha_{k-1}'\le\tfrac12$ we have $d_k\le\tfrac18d_{k-1}$, so $d_k\to0$ and indeed
$D_{k-1}=d_0=1$ throughout.

\emph{The limit body.}  Put $U=\bigcup_kB^{(k)}=\conv\!\big(\bigcup_kb_k\big)$, an
increasing union of convex sets, hence convex, and $B=\overline U$.  All $b_k$ lie in
$\overline B(0,1)$ and the heights lie in $[t_0,t_\infty]$ with
$t_\infty=t_0+\sum_k\tau_k\le0$, so $U$ is bounded and $B$ is compact; $B$ contains the
full-dimensional base $b_0$, so it is a body.  For each fixed $i\ge1$ the invariant
gives $U\cap H_{t_i}=\bigcup_{k\ge i}\big(B^{(k)}\cap H_{t_i}\big)=b_i$.  Moreover
$t_i$ is interior to the height range of $B$ (there are points of $B$ below $t_i$, in
$b_0$, and above, in $b_{i+1}$).  For a convex set, $\intr(\overline U)=\intr U$, so
\[
  B\cap H_{t_i}
  =\overline{\,\intr(B)\cap H_{t_i}\,}
  =\overline{\,\intr(U)\cap H_{t_i}\,}
  \subseteq\overline{\,U\cap H_{t_i}\,}=b_i ,
\]
the first equality because $H_{t_i}$ meets $\intr B$.  Combined with
$b_i=U\cap H_{t_i}\subseteq B\cap H_{t_i}$ this gives $B\cap H_{t_i}=b_i$ exactly: the
closure creates no new points, and the cross-section at $t_i$ realises $s_i$.

\emph{The apex.}  For every $u'$ and every $t$ interior to the height range, the
envelope identity (Remark~\ref{rem:envcont}) exhibits $h_{B\cap H_t}(u')$ as the
least concave majorant of the data $\{(t_k,g_k(u'))\}$ at $t$; since
$g_k(u')\le d_k$, it is bounded by $\Psi(t)$, the least concave majorant of
$\{(t_k,d_k)\}$.  Since $d_k\to0$,
Lemma~\ref{lem:majorant} gives $\Psi(t)\to0$ as $t\to t_\infty^-$, so
$\diam(B\cap H_t)\le2\Psi(t)\to0$ and $B\cap H_{t_\infty}$ is a single point $p$.  All
of $B$ lies at heights $\le t_\infty$, so $t_\infty$ is the last support value in
direction $e_n$ and $p$ is the last support point; as the section there is a point, the
path does not terminate.

\emph{Universality of the tail.}  Fix $x<t_\infty$.  For every $k$ with $t_k\in(x,t_\infty)$
the path $P_{B,e_n}$ passes through $[B\cap H_{t_k}]=[b_k]=s_k$.  Since each $\sigma_m$
occurs as $s_k$ for infinitely many $k$, it occurs for some such $k$ with $t_k>x$;
hence $\{\sigma_m\}\subseteq\overline{P_{B,e_n}((x,t_\infty))}$, and taking closures,
$\overline{P_{B,e_n}((x,t_\infty))}=\A_{n-1}$.  As this holds for every $x<t_\infty$,
\[
  T(B,e_n)=\bigcap_{x<t_\infty}\overline{P_{B,e_n}((x,t_\infty))}=\A_{n-1}. \qedhere
\]
\end{proof}

\begin{remark}
The last support height $t_\infty$ is not prescribed but determined by the increments
$\tau_k$, and the fast shrinkage $d_k\le\tfrac18d_{k-1}$ enters only through
Lemma~\ref{lem:majorant}.  The centring $0\in\intr b_k$ keeps $0\in\intr N_{k}$ at every
stage, so the recursion does not stall.
\end{remark}

\subsection*{Grafting}
To distribute nearly universal points over a boundary we graft shrunken copies of the
cusp of Theorem~\ref{thm:singlenu} onto a body, one at a time, each new graft far
smaller than the working scale of every cusp already present.  The stability estimate
below shows that the sections at each established cusp move by an amount that is
$O(s)$ at depth $s$, hence negligible against a section width $w(s)\gg s$.  The cusp geometry itself supplies the room: a section
that shrinks with vertical tangent dwarfs, at small depth, the shadow cast through it
by anything planted below.

Let $K\subseteq\overline B(0,2)$ be a body whose last support set in a direction $\rho$
is a single (exposed) point $p$.  Define the \emph{depth} of $x$ as
$s(x)=\langle p-x,\rho\rangle$, so $s\ge0$ on $K$ with equality only at $p$; write
$K_s=K\cap\{s(\cdot)=s\}$ and $w(s)=\diam K_s$.  Call the cusp at $p$ \emph{steep} if
$s/w(s)\to0$ as $s\to0^+$.  The construction of Theorem~\ref{thm:singlenu} produces
steep cusps: the recursion leaves the level gaps $\tau_k$ free, and taking them small
against the diameters $d_k$ gives $s/w(s)\to0$ along the designed levels, hence
between them as well: the width is concave and vanishes at $s=0$, so $s/w(s)$ is
nondecreasing, and its smallness along levels decreasing to $0$ controls it
everywhere below.

\begin{lemma}[Grafting]\label{lem:graft}
Let $K,\rho,p$ be as above, with depth range $[0,s_K]$, and let $G$ be a compact
set within $\overline B(0,2)$ all of whose points have depth $\ge\eta>0$.  Set
$B_0=\overline{\conv(K\cup G)}$ and, for $j\ge1$,
$B_j=\overline{\conv(B_{j-1}\cup F_j)}$, where
$F_j\subseteq\overline B(y_j,R_j)\cap\overline B(0,2)$ is compact, seated at a
point $y_j\in B_{j-1}$ of depth $s_j>0$ (seats on earlier grafts are allowed),
subject to the rule
\[
  R_j\ \le\ \tfrac14\,\eps_j\min\big(s_j^{\,2},\ s_j\big),\qquad
  \textstyle\sum_j\eps_j\le1 .
  \tag{$\dagger$}
\]
Put $B=\overline{\bigcup_jB_j}$ and $\bar s=\tfrac16\min(\eta,s_K)$.  Then, with
$s'(x)=\langle p-x,\rho\rangle$ the depth function of $B$:
\begin{enumerate}
\item[(a)] $p$ is the unique point of $B$ at depth $0$ and remains exposed in
direction $\rho$;
\item[(b)] for every $s\in(0,\bar s\,]$,
$\ \dH\big(B\cap\{s'=s\},\,K_s\big)\ \le\ C\,s$, where
$C=17+\dfrac{10}{\eta}+\dfrac{16}{s_K}$;
\item[(c)] if the cusp at $p$ is steep, the a-shape paths of $B$ and of $K$ in
direction $\rho$ have the same limit points as $s\to0$; in particular
$T(B,\rho)=T(K,\rho)$, and the cusp of $B$ at $p$ is again steep.
\end{enumerate}
\end{lemma}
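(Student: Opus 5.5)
The strategy is to compare $B$ with $K$ through the section-support envelope. Every point of $B$ either lies in $K$ or lies in the closed convex hull of $K$ with the "added mass" $G\cup\bigcup_j F_j$. This added mass sits at positive depth, so at small depth $s$ its only effect on the section is through chords joining points of $K$ near $p$ to points of the added mass. We bound that effect by a shadow estimate and then sum over $j$ using $(\dagger)$.

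\emph{Step 1 (shadow estimate).} Any point of $B$ at depth $s\le\bar s$ is a limit of convex combinations $x=\lambda k+(1-\lambda)z$, with $k\in K$ and $z$ in the convex hull of the added mass. If $z$ has depth $\ge\eta$ and $k$ has depth $\ge0$, then $s\ge(1-\lambda)\eta$, so $1-\lambda\le s/\eta$. Hence $x$ lies within $(1-\lambda)\cdot 4\le4s/\eta$ of $k$, and the depth of $k$ is $\le s/\lambda\le 2s$. We then use the concavity of $s\mapsto h_{K_s}(u')$ on $[0,s_K]$ with value $0$ at $s=0$ (Lemma~\ref{lem:convexbody}). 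Moving $k$ from depth $s/\lambda$ to depth $s$ along the ray from $p$ costs at most $O(s/s_K)\cdot\diam K\le 16s/s_K$ in support function. This yields $h_{B\cap\{s'=s\}}(u')\le h_{K_s}(u')+(10/\eta+16/s_K)s$ for the part of the added mass of depth $\ge\eta$.

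\emph{Step 2 (the grafts).} Grafts may be seated arbitrarily close to $p$, so the depth bound $\eta$ is not available for them. Instead, $F_j$ lies within $R_j$ of a point $y_j\in B_{j-1}$ of depth $s_j$, so $F_j$ reaches depth $\ge s_j-R_j\ge s_j/2$. We handle $F_j$ by induction on $j$. Mixing a point of $F_j$ into a section at depth $s$ perturbs that section by at most $R_j$ times the mixing weight, plus the displacement already controlled for $y_j\in B_{j-1}$. Two cases arise. When $s_j\ge 2s$, the weight is at most $2s/s_j$, so the extra error is at most $R_j\cdot 2s/s_j\le\tfrac12\eps_j s$. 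When $s_j<2s$, the rule $R_j\le\tfrac14\eps_j s_j^2$ gives $R_j\le\eps_j s$ directly. Summing with $\sum\eps_j\le1$ and including a constant for recentering gives the $17s$ term, and hence (b). The lower inclusion $K_s\subseteq B\cap\{s'=s\}$ is trivial since $K\subseteq B$, and passing to closures preserves the bounds. This step is the main obstacle. The induction must avoid compounding: the seat $y_j$ may itself lie on a graft, so the estimate has to be phrased as a single global statement rather than one per graft. That statement is that every point of the added mass lies within $\sum_{i}R_i\,(\ldots)$ of the envelope of $K$ extended by its slopes. I would first try to prove the uniform invariant $h_{B_j\cap\{s'=s\}}(u')\le h_{K_s}(u')+C_j s$ with $C_j$ increasing to $C$.

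\emph{Step 3 (conclusions).} For (a), (b) gives $\diam(B\cap\{s'=s\})\le w(s)+2Cs\to0$. No point of the added mass has depth $\le0$, since each has depth $\ge\eta$ or $\ge s_j/2>0$. Hence $p$ is the unique depth-$0$ point, and the supporting hyperplane at depth $0$ meets $B$ only at $p$, so $p$ is exposed. For (c), if the cusp is steep then $Cs/w(s)\to0$. Normalising sections by diameter, (b) gives $\dH$-distance $O(s/w(s))$ between the unit representatives of $B\cap\{s'=s\}$ and $K_s$ (the renormalisation is Lipschitz as in Corollary~\ref{cor:cont}). Hence the two a-shape paths are asymptotic and have the same limit points, so $T(B,\rho)=T(K,\rho)$. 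Finally, the width of $B\cap\{s'=s\}$ is at least $w(s)$, so $s/w_B(s)\le s/w(s)\to0$ and the cusp of $B$ at $p$ is again steep.
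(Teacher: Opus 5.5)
Your Steps 1 and 3 are sound. Step 1 is essentially the paper's estimate for the host $B_0=\overline{\conv(K\cup G)}$, and (a) and (c) do follow from (b). The gap is in Step 2, which you rightly call the heart of (b): it rests on a claim that fails.

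Write $\psi$ for the section support of $B_{j-1}$ in direction $u'$, measured from $p$, and $\Delta=4$ for a bound on $|\psi|$. Saying that mixing $f\in F_j$ into the level $\{s'=s\}$ ``perturbs that section by at most $R_j$ times the mixing weight'' bounds a displacement in $\Rn$, not a change of section. Let $x=\mu b+(1-\mu)f$ have depth $s$, and write $f=y_j+e$ with $|e|\le R_j$. The comparison point $\mu b+(1-\mu)y_j\in B_{j-1}$ then sits at a depth differing from $s$ by up to $(1-\mu)R_j$. Bringing it back to level $s$ costs the slope of $\psi$ near $s$, which at a steep cusp can be of order $\psi(s)/s\gg1$. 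Done at level $s$, this yields at best $\tfrac{4R_j}{3s_j}\psi(s)$, not $O(\eps_js)$.

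Your far-case bound $R_j\cdot 2s/s_j$ is in fact false in general. Suppose $\psi$ is linear with slope $m$ on an interval from well below $s$ to beyond $s_j$; concavity and $\psi(0)=0$ allow $m$ up to $\psi(s_j)/s_j$, which is unbounded as $s_j\to0$. Take $F_j=\{y_j+R_j\rho\}$ with $y_j$ realising $\psi(s_j)$. Then the new majorant exceeds $\psi(s)$ by about $mR_js/s_j$, which is your bound times $m/2$. Tellingly, your far case never uses the clause $R_j\le\tfrac14\eps_js_j^{2}$ of $(\dagger)$, and that clause is exactly what beats this amplification. The same objection applies to your near case: $R_j\le\eps_js$ is again only a displacement bound.

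The missing idea is to absorb the vertical part of the displacement at the graft's own depth, where the slope is controlled, and then let the pinned apex carry the error to depth $s$ linearly.
\begin{itemize}
\item \emph{Shallow grafts} ($s_j\le s_K/3$). On $[\tfrac34s_j,\tfrac54s_j]$ one has $-4\Delta/s_K\le\psi'\le\psi(\sigma)/\sigma\le4\Delta/(3s_j)$. Hence every contributed point $(\sigma,v)$ satisfies $v\le\psi(\sigma)+(\Delta+1)\eps_js_j$; this is where $s_j^{2}$ enters. Since $\psi(0)=0$, the concave function $\psi(x)+\tfrac43(\Delta+1)\eps_jx$ majorises the graph and every contributed point. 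So the increment at any depth $s$ is at most $7\eps_js$, whether $s_j$ lies above or below $s$.
\item \emph{Deep grafts} ($s_j>s_K/3$). No slope bound is available near $s_j$, which may be the bottom of the depth range. Instead, keep the level $s$ fixed and re-weight. Compare the chord from $(\sigma_0,\psi(\sigma_0))$, with $\sigma_0<s$, to $(\sigma,v)$ against the chord to the seat $(s_j,\psi(s_j))$, which lies under the graph. The weights differ by $O(sR_j/s_j^{2})$, so the excess is $O(\eps_js)$, again using $s_j^2$.
\end{itemize}

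Finally, compounding is avoided not by an invariant relative to $K_s$, which on $(0,\bar s\,]$ would not even control seats deeper than $\bar s$. It is avoided by measuring each increment against $\psi_{B_{j-1}}$. The slope bounds use only $|\psi|\le\Delta$, $\psi(0)=0$, and the fact that the depth range contains $[0,s_K]$. They therefore hold uniformly in $j$, and the increments $17\eps_js$ simply telescope.
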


\begin{proof}
(a)  By $(\dagger)$ every point of $F_j$ has depth $\ge s_j-R_j\ge\tfrac34s_j>0$, and every
point of $G$ has depth $\ge\eta>0$; so $p$ is the unique depth-$0$ point of the
generating set, hence of $B$, and the unique maximiser of $\langle\cdot,\rho\rangle$.

(b)  Fix a transverse unit vector $u'$ and, for a body $C$ with $p\in C$, let
$\psi_C(\sigma)=\sup\{\langle x-p,u'\rangle: x\in C,\ s'(x)=\sigma\}$, the section
support of $C$ in direction $u'$ measured from $p$, defined on the depth range of
$C$.  Each $\psi_C$ is concave (Lemma~\ref{lem:convexbody}), with $\psi_C(0)=0$ by
the uniqueness in (a) applied to the stages already built, and $|\psi_C|\le4=:\Delta$,
since $K$, $G$ and every $F_j$ lie in $\overline B(0,2)$ and hence so does every
$B_j$.  Write $\psi=\psi_{B_{j-1}}$; its domain contains $[0,s_K]$, and for
$0<\sigma\le s_K/2$ concavity gives the two-sided slope bound
\[
  -\frac{4\Delta}{s_K}\ \le\ \psi'(\sigma)\ \le\ \frac{\psi(\sigma)}{\sigma}
  \ \le\ \frac{\Delta}{\sigma}\,,
\]
the lower estimate by comparison with a point of the graph at depth $s_K$, the
upper by $\psi(0)=0$.

We compare successive stages at a fixed depth $s\in(0,\bar s\,]$.  By the envelope
identity (Remark~\ref{rem:envcont}), $\psi_{B_j}$ is the least concave majorant of
the graph of $\psi$ together with the points $(\sigma,v)$ contributed by $F_j$;
by $(\dagger)$ these satisfy $\sigma\in[\tfrac34s_j,\tfrac54s_j]$ and
$v\le\psi(s_j)+R_j$, the latter because the seat $y_j$ lies in $B_{j-1}$ at depth
$s_j$.  Chords joining two contributed points are majorised by treating each end
separately, since the weights on the ends sum to $1$.

\emph{Shallow grafts $(s_j\le s_K/3)$.}  Every contributed abscissa satisfies
$\sigma\le\tfrac54s_j\le s_K/2$, and $[\tfrac34s_j,\tfrac54s_j]$ lies where the
slope bound applies, so
$|\psi(s_j)-\psi(\sigma)|\le\max\big(\tfrac{4\Delta}{3s_j},\tfrac{4\Delta}{s_K}\big)R_j$
and hence, by $(\dagger)$ (using $R_j\le\tfrac14\eps_js_j^{\,2}$ in the middle term
and $s_j\le s_K/3$ in the last),
\[
  v\ \le\ \psi(\sigma)+R_j+\frac{4\Delta}{3s_j}R_j+\frac{4\Delta}{s_K}R_j
  \ \le\ \psi(\sigma)+(\Delta+1)\,\eps_j s_j .
\]
The concave function $\Theta(x)=\psi(x)+\tfrac43(\Delta+1)\eps_j\,x$ therefore
dominates the graph and every contributed point---at $x=\sigma\ge\tfrac34s_j$ the
added linear term is at least $(\Delta+1)\eps_js_j$---so $\psi_{B_j}\le\Theta$ and
the increment at $s$ is at most $\tfrac43(\Delta+1)\eps_j\,s\le7\eps_js$.

\emph{Deep grafts $(s_j>s_K/3)$.}  Here $s\le\bar s\le s_K/6<s_j/2$, and every
contributed abscissa satisfies $\sigma\ge\tfrac34s_j>s$.  A chord from a
contributed point $(\sigma,v)$ to a graph point $(\sigma_0,\psi(\sigma_0))$ with
$\sigma_0<s$ has value
$\psi(\sigma_0)+(s-\sigma_0)\tfrac{v-\psi(\sigma_0)}{\sigma-\sigma_0}$ at $s$,
while concavity gives
$\psi(s)\ge\psi(\sigma_0)+(s-\sigma_0)\tfrac{a}{s_j-\sigma_0}$ with
$a=\psi(s_j)-\psi(\sigma_0)$.  Using $v-\psi(\sigma_0)\le a+R_j$, $|a|\le2\Delta$,
$|\sigma-s_j|\le R_j$,
$\sigma-\sigma_0\ge\sigma-s\ge\tfrac34s_j-\tfrac12s_j=\tfrac14s_j$ and
$s_j-\sigma_0\ge s_j-s\ge\tfrac12s_j$, the excess of the chord over $\psi(s)$ is
at most
\[
  (s-\sigma_0)\Big[\frac{R_j}{\sigma-\sigma_0}
  +|a|\,\frac{|\,s_j-\sigma\,|}{(\sigma-\sigma_0)(s_j-\sigma_0)}\Big]
  \ \le\ s\Big[\frac{4R_j}{s_j}+\frac{16\Delta R_j}{s_j^{\,2}}\Big]
  \ \le\ (4\Delta+1)\,\eps_j\,s\ =\ 17\,\eps_js ,
\]
by $(\dagger)$.

In either case $\psi_{B_j}(s)-\psi_{B_{j-1}}(s)\le17\,\eps_js$, and the increments
telescope: with $\sum_j\eps_j\le1$,
$\ \psi_B(s)-\psi_{B_0}(s)\le17\,s$ (passing to the closure of the increasing union
changes no section at interior depths, as in Remark~\ref{rem:envcont}).

\emph{The host.}  The same comparison applied once to
$B_0=\overline{\conv(K\cup G)}$ has contributed points from $G$ at depths
$\sigma_G\ge\eta\ge6s$ with values $v_G\le\Delta$.  A chord from
$(\sigma_G,v_G)$ to $(\sigma_0,\psi_K(\sigma_0))$, $\sigma_0<s$, carries weight
$(s-\sigma_0)/(\sigma_G-\sigma_0)\le s/(\eta-s)\le\tfrac65\,s/\eta$ on the deep end,
so its excess over $\psi_K(s)$ is at most
$\tfrac65(s/\eta)\,2\Delta+\big(\psi_K(\sigma_0)-\psi_K(s)\big)
\le\tfrac{12\Delta}{5}\,\tfrac{s}{\eta}+\tfrac{4\Delta}{s_K}\,s$,
the second term by the slope bound on $[0,s_K/2]$.  With $\Delta=4$ this is at most
$(10/\eta+16/s_K)\,s$, so $\psi_{B_0}(s)-\psi_K(s)\le(10/\eta+16/s_K)\,s$.

Altogether, for every $u'$ and every $s\in(0,\bar s\,]$,
\[
  0\ \le\ h_{B\cap\{s'=s\}}(u')-h_{K_s}(u')
  \ \le\ \Big(17+\frac{10}{\eta}+\frac{16}{s_K}\Big)s ,
\]
and since $B\cap\{s'=s\}\supseteq K_s$, the Hausdorff distance between the sections
is the supremum over $u'$ of the left-hand difference.

(c)  By (b), $\dH\big(B\cap\{s'=s\},K_s\big)\le Cs=o\big(w(s)\big)$ as $s\to0$, by
steepness.  Renormalising by diameters that differ by $O(s)$ as well, the a-shapes
of the two sections converge in $\dA$ as $s\to0$, so the two paths have identical
limit sets and $T(B,\rho)=T(K,\rho)$.  Finally $w_B(s)\ge w(s)$ gives
$s/w_B(s)\to0$.
\end{proof}

\begin{remark}\label{rem:shadowrule}
The rule $(\dagger)$ is sharp in kind.  Placing a second cusp apex on the unit sphere
at angle $\theta$ from $\rho$ seats it, relative to the first apex, at depth
$\asymp\theta^2$ with transverse extent $\asymp\theta$: the ratio (extent)/(depth)
diverges, $(\dagger)$ fails at every scale, and indeed the chord from such a point
spreads the sections at depth $s$ by $\asymp\sqrt{s}$, which swamps any steep cusp.
This is why apexes of equal height cannot cluster and every grafting scheme must be
hierarchical: each new cusp shallow as well as small compared with the cusps it lands
among.
\end{remark}

\subsection*{Nearly universal points dense in the boundary}

\begin{theorem}\label{thm:densenu}
For every $n\ge3$ there is a body $B\subseteq\Rn$ whose nearly universal points are
dense in $\partial B$.  More generally, given any sequence
$\Gamma_1,\Gamma_2,\dots\subseteq\A_{n-1}$ of sets realisable as tails, the grafting
sites may be chosen dense in $\partial B$ with $T(B,\rho_j)=\Gamma_j$ at the $j$-th
site, where $\rho_j$ is the corresponding normal direction.
\end{theorem}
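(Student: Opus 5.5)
We build $B$ by iterated grafting via Lemma~\ref{lem:graft}, starting from a host with a single steep cusp and adding, one at a time, rescaled steep cusps realising the prescribed tails. For each $j$ we first fix a body $K^{(j)}$ and direction with last support point an exposed steep cusp and $T(K^{(j)},\rho_j)=\Gamma_j$. For the first assertion take $\Gamma_j=\A_{n-1}$ and use Theorem~\ref{thm:singlenu}, choosing the free gaps $\tau_k$ small against the $d_k$ so the cusp is steep, as noted before Lemma~\ref{lem:graft}. For general realisable $\Gamma_j$ we need a steep realising cusp. We will try to obtain one by an affine stretch: the map $(x',t)\mapsto(x',\lambda t)$ preserves a-shapes of horizontal sections. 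Under it $s/w(s)$ at corresponding sections is multiplied by $\lambda$, which can be made small but not tend to $0$. So instead we will use the steep-cusp tail construction underlying Corollary~\ref{cor:pathdense}; we expect that to be available, since it uses the same recursion with free gaps.

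Next we enumerate a countable base $\{V_m\}$ of open sets meeting the boundary, and list pairs $(j,m)$ so that each $V_m$ receives infinitely many sites with every index. To build $B_j$, we pick a point $y_j\in\partial B_{j-1}$ of some depth $s_j$ relative to \emph{every} previously established apex $p_1,\dots,p_{j-1}$, lying in the designated open set. Here depth is measured in each apex's own direction $\rho_i$, and all these depths are positive because the $p_i$ are exposed. We then graft $F_j$, a copy of $K^{(j)}$ shrunk to radius $R_j$, rotated so that its cusp direction is a normal to $B_j$ near $y_j$, with its apex pushed slightly outward so that the apex becomes the exposed last support point in direction $\rho_j$. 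The rule $(\dagger)$ must be imposed simultaneously with respect to every earlier apex: we take $R_j\le\tfrac14\eps_j\min_i\min(s_{j,i}^2,s_{j,i})$ with $\eps_j=2^{-j}$. This is a finite minimum, so it is positive.

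The main obstacle is applying Lemma~\ref{lem:graft} at each apex $p_i$ for \emph{all} later grafts together with the closure, rather than one at a time. We will handle it as follows. For apex $p_i$, take as the lemma's $K$ the body $B_i$ restricted near $p_i$. Its cusp is steep because $B_i$ coincides with the rescaled $F_i$ near $p_i$ up to an $O(s)$ perturbation, so part (c) gives steepness. Take $G$ to be the earlier part of $B_i$ that lies away from $p_i$, at depth $\ge\eta_i$; this requires the apex of $F_i$ to protrude, so we shape $F_i$ to stick out. The grafts $F_j$ with $j>i$ are then exactly the sequence in Lemma~\ref{lem:graft}, with seats anywhere in $B_{j-1}$, which the lemma allows. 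Part (c) then yields $T(B,\rho_i)=T(F_i)=\Gamma_i$ for the final closure $B$, and part (a) shows $p_i$ remains an exposed point of $B$.

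Finally, density: every point of $\partial B$ is a limit of boundary points of the $B_j$. Each $V_m$ meeting $\partial B$ eventually receives a graft whose apex lies within $R_j+o(1)$ of a boundary point in $V_m$, and $R_j\to0$. Since $\partial B_{j}$ stays Hausdorff close to $\partial B$ (with total displacement $\sum R_j$ small), the apexes are dense in $\partial B$. One bookkeeping point must be checked here: boundary points of early stages may become interior later. We avoid this by choosing sites from $\partial B$ approximants at each stage, using the enumeration of base sets to revisit each region infinitely often.
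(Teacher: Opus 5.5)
Your architecture matches the paper's. You impose rule $(\dagger)$ against every earlier apex with $\eps_j=2^{-j}$, and you let the apex protrude over a seat so that $B_{j-1}$ serves as the deep set $G$. You use Lemma~\ref{lem:graft}(a)--(c) for persistence of exposedness, steepness and tail, and Hausdorff convergence of boundaries plus infinitely repeated visits for density. Your two-stage use of the lemma ($F_i\to B_i$, then $B_i\to B$) is valid; the paper does it in one step with $K=F_i$ and $G=B_{i-1}$. Two small repairs are needed. The cusp direction must be an outer normal of $B_{j-1}$ \emph{at} the seat, not merely near it. You should also enumerate a base of $\Rn$ rather than of the not-yet-built $\partial B$. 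Apexes stay on $\partial B$ by part (a), so nothing is lost when earlier boundary points become interior. With every $\Gamma_j=\A_{n-1}$, this proves the first assertion.

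The gap is the step you leave as ``we expect that to be available'': a \emph{steep} gadget whose tail is \emph{exactly} a prescribed realisable $\Gamma_j$.
\begin{itemize}
\item Corollary~\ref{cor:pathdense} as stated does not supply it. It shadows only to a fixed tolerance $\eps_m$, so it determines the tail only up to $O(\eps_m)$.
\item You also cannot simply feed the recursion of Theorem~\ref{thm:singlenu} a sequence dense in $\Gamma_j$. Between designed levels the realised path is the Minkowski interpolation of consecutive designed shapes, and those segments add their own limit points to the tail.
\end{itemize}
What is needed is a designed sequence with limit set exactly $\Gamma_j$ \emph{and} consecutive $\dA$-steps tending to $0$. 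Then Lemma~\ref{lem:mlshort} confines each segment to a vanishing neighbourhood of its endpoints, so the tail is exactly $\Gamma_j$. The free gaps $\tau_k$ still give steepness.

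The paper gets such a sequence from Theorem~\ref{thm:continuum}. Since $\Gamma_j$ is a continuum, you can concatenate $\eps$-chains through finite $\eps$-nets of $\Gamma_j$ for $\eps\downarrow0$. Degenerate shapes are replaced by full-dimensional ones at distances that also tend to $0$.

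An alternative closer to what you had in mind: take a body with $T(K,\rho)=\Gamma_j$ and sample $P_{K,\rho}$ at depths decreasing to $0$, with oscillation between consecutive samples tending to $0$. This is the Corollary's construction with tolerances shrinking along the path, and the values are already full-dimensional.

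Either way, this is the one idea your proposal is missing for the general statement.
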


\begin{proof}
Set $B_0=\overline B(0,1)$ and construct $B_j=\overline{\conv(B_{j-1}\cup F_j)}$, where
$F_j$ is a gadget produced by Theorem~\ref{thm:singlenu}---built steep, with
designed a-shapes forming a sequence whose limit set is $\Gamma_j$ and whose
consecutive steps tend to $0$ in $\dA$, so that the interpolating Minkowski
segments accumulate only on $\Gamma_j$ (Lemma~\ref{lem:mlshort}); such a sequence
exists because each $\Gamma_j$ is a continuum (Theorem~\ref{thm:continuum}):
concatenate $\eps$-chains within $\Gamma_j$ at scales $\eps\downarrow0$.  (For
$\Gamma_j=\A_{n-1}$ the step condition is unnecessary, and a dense sequence with
each term repeated infinitely often suffices.)  The gadget is
scaled to lie in $\overline B(x_j,R_j)$ and reflected so that its apex is
$a_j=x_j+\eta_jR_j'\,u_j$ for a seat $x_j\in\partial B_{j-1}$, an outer unit normal
$u_j$ of $B_{j-1}$ at $x_j$, and a protrusion $0<\eta_jR_j'\le R_j$.  The data
$(x_j,R_j)$ are chosen subject to:
\begin{enumerate}
\item[(i)] $R_j\le\tfrac14\,2^{-j}\min\big(s_{j,i}^{\,2},\,s_{j,i}\big)$ for every $i<j$, where
$s_{j,i}=\langle a_i-x_j,\,u_i\rangle$ is the depth of the seat below the $i$-th apex;
\item[(ii)] $R_j\le2^{-j}$;
\item[(iii)] the seats follow a schedule, described below, making them dense in
$\partial B$.
\end{enumerate}
Each $s_{j,i}$ is positive: $a_i$ is, by induction and Lemma~\ref{lem:graft}(a), the
unique maximiser of $\langle\cdot,u_i\rangle$ on $B_{j-1}$, and the schedule takes
$x_j\ne a_i$.  So (i) imposes finitely many positive upper bounds and can always be
met.

\emph{Each apex retains its designed tail.}  Fix $j$ and apply Lemma~\ref{lem:graft}
with $K=$ the gadget $F_j$, $\rho=u_j$, $p=a_j$, taking as the deep set $G=B_{j-1}$
(every point of $B_{j-1}$ has depth
$\ge\eta_jR_j'>0$ below $a_j$, because $x_j$ maximises $\langle\cdot,u_j\rangle$ on
$B_{j-1}$) and as the grafted sequence the later gadgets $F_{j'}$, $j'>j$, whose seats
have depths $s_{j',j}>0$ and radii obeying $(\dagger)$ with $\eps_{j'}=2^{-j'}$,
by (i).  Parts (b) and (c) give:
$a_j$ stays exposed in direction $u_j$ under all later grafts, the cusp at $a_j$ stays
steep, and the a-shape path of the final body $B=\overline{\bigcup_jB_j}$ in direction
$u_j$ has the same limit set as the designed path of $F_j$; that is,
$T(B,u_j)=\Gamma_j$, and for $\Gamma_j=\A_{n-1}$ the point $a_j$ is nearly universal.
(Passing from the increasing union to its closure changes no section at positive
depth, by the interior--closure identity used in Theorem~\ref{thm:singlenu}.)

\emph{The schedule.}  By (ii), $\dH(B_{j},B)\le\sum_{j'>j}2^{-j'}\to0$, and since all
bodies contain $\overline B(0,1)$, the boundaries converge in Hausdorff distance as
well (boundaries of convex bodies with a common interior ball converge together
with the bodies; see Schneider, \emph{Convex Bodies}, \S1.8).  Enumerate, for each $k$, a countable dense subset $Z_k$ of $\partial B_k$, and
process the pairs $(k,z)$, $z\in Z_k$, in a single diagonal sequence; at the stage
handling $(k,z)$, take the seat $x_j$ to be a point of $\partial B_{j-1}$ within
$2^{-j}+\dH(\partial B_k,\partial B_{j-1})$ of $z$, avoiding the countably many earlier
apexes.  Every point of $\partial B$ is a limit of points of $\partial B_k$ for large
$k$, hence a limit of scheduled targets, hence a limit of seats, hence---by (ii)---a
limit of apexes $a_j$.  The apexes are therefore dense in $\partial B$, and each
carries its designed tail.
\end{proof}

\begin{remark}
Theorem~\ref{thm:densenu} contains the finite and countable simultaneous statements:
prescribed tails $\Gamma_1,\dots,\Gamma_m$ (or a countable sequence) in prescribed
directions are obtained by seating the first $m$ grafts at the support points of those
directions and letting the schedule fill in the rest, or stop.  It also shows that the
set of non-boring points of a body can be dense in its boundary, sharpening the
existence statement of Theorem~\ref{thm:singlenu}.
\end{remark}

The construction controls more than the tails: it controls the paths.  Let
$\mathcal P$ denote the space of continuous maps $(0,1]\to\A_{n-1}$ with the topology
of uniform convergence on compact subsets of $(0,1]$; since $\A_{n-1}$ is compact
metric, $\mathcal P$ is separable and completely metrisable.  For a boundary point $p$
of $B$ exposed in direction $\rho$, write $Q_{B,p}\colon(0,\delta]\to\A_{n-1}$ for the
a-shape path of the sections of $B$ at depth $s$ below $p$.

\begin{corollary}\label{cor:pathdense}
The body $B$ of Theorem~\ref{thm:densenu} may be constructed so that, for a prescribed
countable family $\{P_m\}\subseteq\mathcal P$ and $\eps_m\to0$, each $P_m$ is shadowed
at some site: there are $j=j(m)$ and an increasing homeomorphism
$\varphi\colon(0,\delta_j]\to(0,1]$ with
$\dA\big(Q_{B,a_j}(s),\,P_m(\varphi(s))\big)\le\eps_m$ for all $s\in(0,\delta_j]$.
Taking $\{P_m\}$ dense, the paths at the grafting sites of a single body are dense in
$\mathcal P$, up to increasing reparametrisation.
\end{corollary}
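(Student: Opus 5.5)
The plan is to run the construction of Theorem~\ref{thm:densenu} unchanged, but to interleave into its schedule a second family of stages. At these stages the gadget $F_j$ is not a nearly universal cusp. Instead it is a steep cusp built by the recursion of Theorem~\ref{thm:singlenu} so as to realise, up to reparameterisation and within $\eps_m/3$, the path $P_m$, with a tail of our choosing (say $\A_{n-1}$, or the tail of $P_m$ if one prefers). Enumerate pairs $(k,z)$ for density and indices $m$ in one diagonal sequence. When index $m$ comes up, seat the gadget for $P_m$ at any admissible seat avoiding earlier apexes, subject to constraints (i) and (ii) as before. Density and the preservation of tails are then exactly as in Theorem~\ref{thm:densenu}. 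What remains is the uniform shadowing on all of $(0,\delta_j]$, not merely near $s=0$.

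\emph{Step 1 (the gadget).} Sample $P_m$ at points $1=\tau_0>\tau_1>\dots\downarrow0$, fine enough that consecutive samples are within $\eps_m/(3(C_n+1))$ in $\dA$ and each restriction to $[\tau_{i+1},\tau_i]$ stays that close to its endpoints. This is possible on each compact piece by uniform continuity, with the mesh refined as $\tau\to0$. Some values of $P_m$ may lie in $\A_{n-1}\setminus\Ac_{n-1}$. Replace such samples by nearby full-dimensional shapes using Proposition~\ref{prop:open-connected}, at cost $\eps_m/3$. Then feed this sequence of shapes as the designed sections $b_k$ of Theorem~\ref{thm:singlenu}, with $\tau_k$ chosen small enough for steepness. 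By Lemma~\ref{lem:envelope}, between designed heights the sections are exactly the Minkowski interpolants. Lemma~\ref{lem:mlshort} then says the path of the gadget, with designed heights matched to the $\tau_i$ and $\varphi$ piecewise linear between them, stays within $2\eps_m/3$ of $P_m\circ\varphi$ on the whole depth range up to the base. Truncate at the depth $\delta_j$ of the first designed level, so that the base cap is excluded.

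\emph{Step 2 (stability under grafting).} Lemma~\ref{lem:graft}(b) gives only $\dH\le Cs$ for $s\le\bar s$, and steepness turns this into $o(1)$ in $\dA$ only as $s\to0$. To obtain a uniform bound $\eps_m/3$ on $(0,\delta_j]$, I would shrink: choose $\delta_j\le\bar s$ and rescale the gadget's design so that $C s/w(s)\le\eps_m/(6c)$ for all $s\le\delta_j$. Here $c$ is the renormalisation Lipschitz constant from Corollary~\ref{cor:cont}. Steepness gives $s/w(s)\le\theta$ on an initial interval for any $\theta$, and we may simply declare $\delta_j$ to be the end of that interval. We then place the whole of $P_m$'s sample sequence inside $(0,\delta_j]$, which is possible because the recursion leaves heights free. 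The host $G=B_{j-1}$ contributes via $\eta=\eta_jR'_j$ to $C$. Since everything is scaled together, I would fix $C$ in gadget-intrinsic units and check that the ratio $Cs/w(s)$ is scale-invariant under the joint dilation. Later grafts only require (i), which can always be met.

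The main obstacle is this uniformity in Step 2. One must verify that the constant $C=17+10/\eta+16/s_K$ and the ratio $s/w(s)$ transform compatibly under the rescaling used to seat the gadget, so that the requirement can be imposed before the seat is known. If the scaling does not cooperate, I would instead impose steepness quantitatively, $s/w(s)\le\theta_j$ on $(0,\delta_j]$, with $\theta_j$ chosen after $\eta$ is fixed. The recursion permits this, since $\tau_k$ can be shrunk stage by stage.
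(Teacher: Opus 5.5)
Your proposal is correct and follows essentially the paper's route. You interleave gadgets designed against $(P_m,\eps_m)$ into the schedule, and Lemmas~\ref{lem:envelope} and~\ref{lem:mlshort} give uniform shadowing of the gadget's own path on $(0,\delta_j]$. Lemma~\ref{lem:graft}(b) then absorbs the host and all later grafts once $\sup_{s\le\delta_j}C_js/w(s)$ is small, and the free gaps $\tau_k$ secure that.

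Your first hope in Step~2 would fail. The constant $C_j$ carries the term $10/\eta$, which has the ambient bound $\Delta=4$ built in, so $C_js/w(s)$ gets worse, not invariant, as the gadget shrinks. Your fallback is the right ordering and is exactly what the paper means by ``steep enough'': fix $R_j$, the protrusion and $\tau_1$, which bounds $C_j$ above; then shrink $\tau_2,\tau_3,\dots$ so that $\delta_j/w(\delta_j)\le\eps_m/C_j$.

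One slip: your parenthetical allowing tail $\A_{n-1}$ at such a site is inconsistent with shadowing $P_m$ uniformly down to $s\to0$. Nothing in the proof depends on it.
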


\begin{proof}
Design the gadget at stage $j$ against a pair $(P_m,\eps_m)$, each pair recurring
infinitely often in the schedule.  Between consecutive designed levels the section
supports of the gadget are linear in the height (Lemma~\ref{lem:envelope}), so its
a-shape path is the piecewise-Minkowski-linear path through the designed a-shapes, as
in Theorem~\ref{thm:sufficiency}.  Choose sampling parameters $1=u_1>u_2>\cdots\to0$ by covering each dyadic block
$[2^{-l-1},2^{-l}]$ with a finite net on which the oscillation of $P_m$ between
consecutive points is below $\eps_m$; uniform continuity of $P_m$ on each block
supplies the nets, and their union accumulates only at $0$, so the samples indeed
decrease to $0$.  By Lemma~\ref{lem:mlshort} the Minkowski segment between the
designed shapes at consecutive samples then stays within a fixed multiple of
$\eps_m$ of $P_m$ on $[u_{k+1},u_k]$.  Take the $k$-th designed shape to be a
nondegenerate shape within $\eps_m$ of $P_m(u_k)$, possible since
$\Ac_{n-1}$ is dense (Proposition~\ref{prop:open-connected}).  The recursion of
Theorem~\ref{thm:singlenu} constrains only the sizes $d_k$ and gaps $\tau_k$ from
above, never the shapes, so the sampling may be refined freely; let $\varphi$ send the
$k$-th designed depth to $u_k$ and interpolate linearly.  Taking the gadget steep
enough that $\sup_{s\le\delta_j}\,C_j\,s/w(s)\le\eps_m$, with $C_j$ the constant of
Lemma~\ref{lem:graft}(b) for the $j$-th site and $\delta_j$ within its range, the
perturbation of the realised path by
the host and all later grafts is a further $\eps_m$ uniformly on $(0,\delta_j]$.  This
proves the shadowing statement, with a fixed multiple of $\eps_m$ in place of
$\eps_m$; density follows,
since uniform closeness to $P_m$ on all of $(0,1]$ is stronger than closeness in
$\mathcal P$.
\end{proof}

\begin{remark}
The reparametrisation cannot be dispensed with: the level gaps $\tau_k$ of
Theorem~\ref{thm:singlenu} are forced to shrink rapidly, so the realised path
traverses its designed shapes on a schedule of its own.  Up to that schedule, a single
body exhibits, at a dense set of boundary points, a dense set of ways of approaching
the degenerate contact.
\end{remark}

\section{Optimisation and higher codimension}\label{sec:optim}

The framework suggests several optimisation questions.  Suppose a path $P$ is realised
by a body $B$, with $\rho$ vertical.  Normalising to base diameter $1$ and height $1$,
one asks for the body of \emph{minimum} volume realising $P$.  (The supremum of
volume is the base area, approached by building a cylinder and squashing the original
$B$, so only the minimum poses a question.)

\begin{question}\label{q:minvol}
Among bodies realising a given cross-section path $P$ with base diameter and height
$1$, is the infimum of volume attained, and what does a minimiser look like near the
last support point?
\end{question}

There is also a higher-codimension variant.  Given a body $J\subseteq\mathbb R^i$
(the projection) and a map $Q\colon J\to\A_{n-i}$, one seeks a body $B\subseteq\Rn$
projecting onto $J$ such that, under a suitable parameterisation, the fibres of the
projection realise $Q$.  Certain cases follow by combining
Theorem~\ref{thm:necessity} with an inductive application of the single-point
construction; a full analysis is left to future work.

\section{Open problems}\label{sec:open}

\begin{question}\label{q:exactchar}
Characterise the exactly realisable paths intrinsically.  Local rectifiability is
necessary (Theorem~\ref{thm:necessity}) but not sufficient
(Theorem~\ref{thm:norealize}); beyond it, all mixed-width ratios must admit a common
reparameterisation after which each has derivative of locally bounded variation
(Lemma~\ref{lem:mixedwidth}), and the full requirement is that a single scale,
translation, and reparameterisation render every section support concave at once.
For box paths the half-power dichotomy of Theorem~\ref{thm:norealize} and
Proposition~\ref{prop:hierzig} suggests a complete answer may be within reach;
already open is whether the amplitude restriction in
Proposition~\ref{prop:hierzig} can be removed.  In
general: do the mixed-width conditions suffice, or do functionals beyond the widths
impose independent constraints?
\end{question}

\begin{question}
Does every body possess a boring point?  Is there a body all of whose boundary points
are non-boring?
\end{question}

\begin{question}
What are the possible topological and Borel types of the set of non-boring points on
the boundary of a body?  Theorem~\ref{thm:continuum} constrains the individual tails,
and Theorem~\ref{thm:densenu} shows the nearly universal points can be dense; can they
have full measure, or be residual in $\partial B$, or is the set of non-boring points
always small in some quantitative sense?
\end{question}

\begin{question}\label{q:typical}
What are the non-boring points of a random convex body?  The common probabilistic
models give degenerate answers: the convex hull of finitely many random points is a
polytope, boring in every direction (Proposition~\ref{prop:poly} at vertices;
elsewhere the path terminates), and smooth
models of everywhere positive curvature are boring throughout
(Proposition~\ref{prop:smooth}).  The question is alive for typical bodies in the
sense of Baire category, where most bodies are smooth and strictly convex yet nowhere
twice differentiable, with extreme curvature behaviour at most boundary points---the
raw material of Proposition~\ref{prop:degenerate}, though our constructions need it in
a quantitatively fast form.  Is the set of non-boring points of a typical body empty,
nonempty, dense?  Do typical bodies possess nearly universal points?
\end{question}

\begin{question}
Which continua in $\A_{n-1}$ occur as tails?  Every tail is a continuum
(Theorem~\ref{thm:continuum}) and $\A_{n-1}$ itself occurs (Theorem~\ref{thm:sufficiency});
is every continuum a tail?
\end{question}


\begin{thebibliography}{9}

\bibitem{Lean4}
L.~de~Moura and S.~Ullrich,
\emph{The Lean 4 theorem prover and programming language},
in: Automated Deduction -- CADE 28,
Lecture Notes in Computer Science 12699, Springer, 2021, 625--635.

\bibitem{mathlib}
The mathlib Community,
\emph{The Lean mathematical library},
in: Proceedings of the 9th ACM SIGPLAN International Conference on
Certified Programs and Proofs (CPP 2020), ACM, 2020, 367--381.

\bibitem{Schneider}
R.~Schneider,
\emph{Convex Bodies: The Brunn--Minkowski Theory},
second expanded edition,
Encyclopedia of Mathematics and its Applications 151,
Cambridge University Press, 2014.

\end{thebibliography}
\end{document}